\theoremstyle{plain}
\newtheorem{theorem}{Theorem}[section]
\newtheorem{corollary}[theorem]{Corollary}
\newtheorem{lemma}[theorem]{Lemma}
\newtheorem{cor}[theorem]{Corollary}
\theoremstyle{definition}
\newtheorem{remark}[theorem]{Remark}
\newtheorem{remarks}[theorem]{Remarks}
\numberwithin{equation}{section}
\newcommand{\N}{\mathbb{N}}
\newcommand{\nat}{\mathbb{N}}
\newcommand{\T}{\mathbb{T}}
\newcommand{\ein}{\boldsymbol{1}}
\newcommand{\eps}{\varepsilon}
\newcommand{\dopu}{{\,:}\allowbreak\ }
\DeclareMathOperator{\re}{Re}
\newcommand{\Id}{{\rm Id}}
\DeclareMathOperator{\supp}{supp}
\DeclareMathOperator{\dist}{dist}
\DeclareMathOperator{\cconv}{\overline{\mathrm{conv}}}
\newcommand{\caconv}{\overline{\mathrm{aconv}}}
\newcommand{\ext}[1][X^*]{\ensuremath{\mathrm{ext}(B_{#1})}}
\renewcommand{\leq}{\leqslant}
\renewcommand{\geq}{\geqslant}
\newcommand{\beq}{\begin{equation}}
\newcommand{\eeq}{\end{equation}}
\def\DP{Daugavet property}
\newcommand{\BS}{Banach space}
\begin{document}

\title[Lushness, numerical index 1 and the Daugavet property in r.i.\ spaces]
{Lushness, numerical index 1 and the Daugavet property in
  rearrangement invariant spaces}

\author[Kadets]{Vladimir Kadets}

\author[Mart\'{\i}n]{Miguel Mart\'{\i}n}

\author[Mer\'{\i}]{Javier Mer\'{\i}}

\author[Werner]{Dirk Werner}

\address[Kadets]{Department of Mechanics and Mathematics, Kharkov National
University,\linebreak
 pl.~Svobody~4,  61077~Kharkov, Ukraine}
\email{vova1kadets@yahoo.com}

\address[Mart\'{\i}n \& Mer\'{\i}]{Departamento de An\'{a}lisis Matematico\\
Facultad de Ciencias\\
Universidad de Granada\\
18071 Granada, Spain}
\email{\texttt{mmartins@ugr.es} \qquad \texttt{jmeri@ugr.es}}

\address[Werner]{Department of Mathematics, Freie Universit\"at Berlin,
Arnimallee~6, \qquad {}\linebreak D-14\,195~Berlin, Germany}
\email{werner@math.fu-berlin.de}

\thanks{The work of the first-named author
was supported by a grant from the {\it Alexander-von-Humboldt
Stiftung}. The second and third author were partially supported by
Spanish MICINN and FEDER project no.\ MTM2009-07498 and Junta de
Andaluc\'{\i}a and FEDER grants FQM-185 and
P09-FQM-4911.}

\date{February 14, 2011}

\keywords{lush space; numerical index; Daugavet property; K\"{o}the
  space; rearrangement invariant space}

\subjclass[2010]{Primary 46B04. Secondary  46E30.}

\begin{abstract}
We show that for spaces with 1-unconditional bases
lushness, the alternative Daugavet property and numerical
index~1 are equivalent. In the class of rearrangement
invariant (r.i.)\  sequence spaces the only examples of spaces with
these properties are $c_0$, $\ell_1$ and $\ell_\infty$.
The only lush r.i.\ separable function space on $[0,1]$ is $L_1[0,1]$;
the same space is the only r.i.\ separable function space on $[0,1]$
with the Daugavet property over the reals.
\end{abstract}


\maketitle

\section{Introduction}

The Daugavet property of a Banach space $X$ can be defined by
requiring that $\|\Id+T\|=1+\|T\|$ for all compact operators
$T\dopu X\to X$. (See Section~\ref{sec2} for a more
detailed discussion.) This is an isometric property of the
particular norm of $X$ and it is not
invariant under equivalent norms. In the setting of the classical
function spaces this property seems to be closely linked to the
sup-norm or the $L_1$-norm since for example $C[0,1]$,
$L_1[0,1]$, the disc algebra and $H^\infty$ (in their natural
norms) have the \DP\ whereas $L_p[0,1]$ fails it for $1<p<\infty$.
Nevertheless, there are very different examples of spaces with the
\DP, for instance, the space of Lipschitz functions on a metric
space, cf.\ \cite{IvKadWer}, which is in general not even an
$\mathcal{L}_\infty$-space; or some more exotic spaces such as
Talagrand's space \cite{KadSSW,Tal} and Bourgain-Rosenthal's space
\cite{BouRos,KW}. One of the main results in the present paper
(Corollary~\ref{cor-L1-Daug}) is that in the class of real separable
rearrangement invariant K\"othe function spaces on a finite
measure space there is, however, isometrically only one space with
the \DP, namely $L_1[0,1]$. (A somewhat weaker statement was
previously proved in \cite{AcKaMa}.)

We also study relatives of the \DP\ like the alternative \DP, lushness
and having numerical index~1. These properties will be recalled in
the next section. In Section~\ref{sec3} we prove, building on results
from \cite{AvKaMaMeSh2}, that these three properties are equivalent
for spaces with a 1-unconditional basis and that they characterise
$c_0$, $\ell_1$ or $\ell_\infty$ among the symmetric sequence spaces.

Let us briefly indicate the structure of the paper.
Section~\ref{sec2} contains pertinent definitions and background
material. In Section~\ref{sec3} we study symmetric sequence spaces
and prove the results just mentioned. Finally Section~\ref{sec4}
deals with lushness, the \DP\ and the almost \DP\ for rearrangement
invariant K\"othe function spaces.

We finish this introduction with some notation. We write $\T$ to
denote the set of (real or complex) scalars of modulus one. By
$\re(\cdot)$ we denote the real part if we are in the complex case
and just the identity if we are in the real case. Given a Banach
space $X$ and a subset $A\subset X$, we write $\cconv(A)$ to
denote the closed convex hull of $A$ and $\caconv(A)$ for the
closed absolutely  convex hull of $A$ (i.e.,
$\caconv(A)=\cconv(\T\,A)$). A \emph{slice} of a convex subset
$B\subset X$ is  a non-empty set which
is formed by the intersection of $B$ with an open real
half-space. Every slice of $B$ has the form
$$
S(B,x^*,\alpha):=\{x\in B\dopu \re x^*(x) > \sup \re x^*(B) -
\alpha \}
$$
for suitable $x^*\in X^*$ and $\alpha>0$.
Further, $B_X$ stands for
the closed unit ball of $X$ and $S_X$ for the unit sphere.

\section{Basic definitions}\label{sec2}
In this section we recall some basic definitions and facts
about real or complex rearrangement invariant spaces and the
properties we are going to investigate. For background on
rearrangement invariant spaces (and on K\"{o}the spaces in general)
we refer the reader to the classical book by J.~Lindenstrauss
and L.~Tzafriri \cite{LTII} for the real case, and to
\cite{OkadaRickerSanchezperez} for the complex case. In the
sequel we follow the notation of \cite{LTII}. Let $(\Omega,
\Sigma, \mu)$ be a complete $\sigma$-finite measure space. A
real or complex Banach space $X$ consisting of equivalence
classes, modulo equality almost everywhere, of locally
integrable scalar valued functions on $\Omega$ is a \emph{K\"{o}the
function space} if the following conditions hold.
\begin{itemize}
\item[(1)]
$X$ is \emph{solid}, i.e.,
if $|f|\leq|g|$ a.e.\  on $\Omega$ with $f$
    measurable and $g\in X$, then $f\in X$ and
    $\|f\|\leq\|g\|$.
\item[(2)] For every $A\in\Sigma$ with $\mu(A)<\infty$ the
    characteristic function $\ein_A$ of $A$ belongs to $X$.
\end{itemize}
Let us comment that the definition of a K\"{o}the space is usually
given in the real case (this is the case of \cite{LTII}), but it
extends to the complex case in an obvious way. Most of the basic
properties we are going to use are known in the real case
but their proofs extend without many problems to the complex
case.

If $X$ is a K\"{o}the function space, then every measurable
function $g$ on $\Omega$ so that $gf\in L_1(\mu)$ for every
$f\in X$ defines an element $x_g^*$ in $X^*$ by $x_g^*(f) =
\int_\Omega fg\,d\mu$. Any functional on $X$ of the form
$x_g^*$ is called an \emph{integral} and the linear space of
all integrals is denoted by $X'$. In the norm induced on $X'$
by $X^*$, this space is also a K\"{o}the function space on
$(\Omega, \Sigma, \mu)$.
The space $X$ is
\emph{order continuous} if whenever $\{f_n\}$ is a decreasing
sequence of positive functions which converges to~$0$ a.e.,
then $\{f_n\}$ converges to $0$ in norm.
(We note that for general Banach lattices, the above defines
$\sigma$-order continuity, which is weaker than order continuity in
this more general context.)
If $X$ is order
continuous, then every continuous linear functional on $X$ is
an integral, i.e., $X^*=X'$.

Let $(\Omega, \Sigma, \mu)$ be one of the measure spaces $\N$
or $[0,1]$. A K\"{o}the function space is a \emph{rearrangement
invariant} (\emph{r.i.\ space}) or \emph{symmetric space} if
the following conditions hold.
\begin{itemize}
\item[$(1)$] If $\tau\dopu  \Omega \to \Omega$ is a
    measure-preserving bijection and $f$ is an integrable
    function on $\Omega$, then $f\in X$ if and only if
    $f\tau^{-1}\in X$, and in this  case
    $\|f\|=\|f\tau^{-1}\|$.
\item[$(2)$] $X'$ is a norming subspace of $X^*$ and thus $X$ is
isometric to a subspace of $X''$. As a subspace of $X''$,
$X=X''$
or $X$ is the closed linear span of the simple integrable functions of $X''$.
\item[$(3)$] a. If $\Omega=\N$ then, as sets,
$$
\ell_1\subset X\subset\ell_\infty
$$
and the inclusion maps are of norm one, i.e., if $f\in \ell_1$
then $\|f\|_X\leq \|f\|_1$, and if $f\in X$ then $\|f\|_\infty\leq \|f\|_X$.

\noindent b. If $\Omega=[0,1]$ then, as sets,
$$
L_\infty[0,1]\subset X\subset L_1[0,1]
$$
and the inclusion maps are of norm one, i.e., if $f\in L_\infty[0,1]$
then $\|f\|_X\leq \|f\|_\infty$, and if $f\in X$ then $\|f\|_1\leq\|f\|_X$.
\end{itemize}

Let us emphasise some results on r.i.\ spaces which we will use
throughout the paper.

\begin{remarks}$ $\label{rem-norma-finite-sums}
\begin{itemize}
\item[(a)]
An r.i.\ space $X$ is order continuous
if and only if it is separable (cf.\ \cite[p.~118]{LTII}).
In this case, all bounded linear functionals on $X$ are integrals
(i.e., $X^*=X'$).
\item[(b)]
When $\Omega=\N$ we will denote
    $e_n=\ein_{\{n\}}\in X$ and $e_n'=\ein_{\{n\}}\in X'$
    for $n\in \N$. For $x\in X$, one has that
$$
\|x\|=\lim_{n}\Bigl\|\sum_{k=1}^n x_ke_k\Bigr\|.
$$
This can easily be deduced from the fact that $X'$ is norming
for $X$ and the monotone convergence theorem (see
\cite[Proposition~1.b.18]{LTII}).
\end{itemize}
\end{remarks}

We now discuss the isometrical Banach space properties in which we are
interested in this paper.

A \BS\ $X$  has the \textit{\DP} if the following identity
 \beq \label{0-eq1}
  \|\Id+T\|=1+\|T\|,
 \eeq
called {\it the Daugavet equation}, holds true for every rank-one
operator $T\dopu  X \to X$, i.e., $T = f \otimes e$, where $e
\in X$ and $f \in X^*$.
This notion was introduced in \cite{KadSSW} where it was shown that
then weakly compact operators also satisfy~(\ref{0-eq1}). It was also
shown in \cite[Lemma~2.2]{KadSSW} that this property is equivalent to the
following slice condition:
\begin{quote}
For every $x \in S_X$, $x^* \in S_{X^*}$, and $\eps > 0$
there is a $y \in S_X$ such that $\re x^*(y) > 1 - \eps$ and $\|x +
y\| > 2 - \eps$.
\end{quote}

A weakening of the \DP\ was introduced in  \cite{MaOi}.
If every rank-one operator $T\in L(X)$
satisfies the norm equality
\begin{equation}\label{aDE} 
\max_{\theta\in\T}\|\Id + \theta\,T\|=1 + \|T\|,
\end{equation}
 $X$ is
said to have the \emph{alternative Daugavet property} ({ADP} for
short). In this case again all weakly compact operators on $X$
also satisfy \eqref{aDE}. A slice characterisation similar to the
above one holds for the ADP as well \cite[Proposition~2.1]{MaOi}:
\begin{quote}
For every $x \in S_X$, $x^* \in S_{X^*}$, and $\eps > 0$ there is
a $y \in S_X$ such that $\re x^*(y) > 1 - \eps$ and $\max_{\theta
  \in \T}\|\theta x + y\| > 2 - \eps$.
\end{quote}

This notion is strongly linked to the theory of numerical ranges.
For every $T \in L(X)$ the quantity
$$
v(T)=\sup\bigl\{|x^*(Tx)| \dopu   x\in S_X,\ x^*\in S_{X^*},\
x^*(x)=1\bigr\}
$$
is called the \emph{numerical radius} of $T$.
 A Banach space is said to have \emph{numerical index~$1$}
\cite{D-Mc-P-W} if every $T\in L(X)$ satisfies the condition
$v(T)=\|T\|$.  It is known \cite{D-Mc-P-W} that
$$
v(T)=\|T\| \qquad \Longleftrightarrow \qquad T \text{ satisfies \eqref{aDE}.}
$$
Thus, $X$ has numerical index~1 if and only if every $T\in L(X)$
satisfies \eqref{aDE}. Evidently, both the Daugavet property and
numerical index~1 imply the ADP. On the other hand, the space
$C([0,1], \ell_2)\oplus_\infty c_0$ has the ADP, but it has neither
the Daugavet property nor  numerical index~1
\cite[Example~3.2]{MaOi}.

A Banach space $X$ is said to be \emph{lush} \cite{BKMW} if for
every $x,y\in S_X$ and every $\eps>0$, there is $x^*\in
S_{X^*}$ such that
$$
x\in S:=S(B_X,x^*,\eps) \qquad \text{and} \qquad
\dist (y,\caconv (S) ) < \eps.
$$
Lush spaces have numerical index~1
\cite[Proposition~2.2]{BKMW}, but it has very recently been
shown that the converse result is not true in general
\cite[Remark~4.2.a]{KMMS}.


\section{Sequence spaces}\label{sec3}
In this section we demonstrate that in the class of  r.i.\  sequence spaces
ADP, lushness and numerical index~1 are equivalent properties,
and that apart from
the classical examples  $c_0$, $\ell_1$ and $\ell_\infty$ there are no
r.i.\  sequence spaces with these properties. We remark that r.i.\
sequence spaces do not have the \DP\ since they admit
rank-one unconditional projections.


\subsection{ADP, lushness and numerical index~1
 are equivalent for spaces with 1-unconditional bases}

Let $X$ be a Banach space and let $A$ be a convex bounded subset of
$X$. According to \cite{AvKaMaMeSh1}, \cite{AvKaMaMeSh2} (see also
\cite{KaSh2}) a
countable family $\{V_n \dopu  n\in\nat\}$ of subsets of $A$
is called \emph{determining} for $A$ if $A \subset \cconv (B)$ for
every $B\subset A$ intersecting all the sets $V_n$. Equivalently,
$\{V_n \dopu  n\in\nat\}$ is determining for $A$ if every slice
of $A$ contains one of the $V_n$.

A convex bounded subset $A$ of a Banach space $X$ is
\emph{slicely countably determined} (\emph{SCD set} for short)
if there is a determining sequence of slices of $A$.
Equivalently \cite[Proposition 2.18]{AvKaMaMeSh2},
$A$ is SCD if there is a determining sequence of
relatively weakly open subsets of $A$.

The next theorem gives in particular a positive answer to
 Question~7.4(b) of~\cite{AvKaMaMeSh2}.

\begin{theorem}
Let $X$ be a space with a $1$-unconditional basis $(e_n)_{n \in \N}$.
Then $B_X$ is an SCD set.
\end{theorem}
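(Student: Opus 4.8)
The plan is to exhibit an explicit determining sequence of slices of $B_X$; by the definition of an SCD set it then suffices to produce a countable family of slices such that every slice of $B_X$ contains a member of the family. The natural candidate is the countable collection
$$\mathcal{S}=\bigl\{S(B_X,\psi,\gamma)\dopu \psi=\textstyle\sum_{k\le m}q_k e_k^*,\ q_k\ \text{rational (Gaussian rational in the complex case)},\ m\in\N,\ \gamma\in\mathbb{Q},\ \gamma>0\bigr\}$$
of slices determined by finitely supported functionals with rational coefficients. Here $e_k^*$ denotes the $k$-th coordinate functional, and I write $P_N(\sum_k x_ke_k)=\sum_{k\le N}x_ke_k$ for the basis projections, which satisfy $\|P_N\|=1$ because the basis is $1$-unconditional. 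The whole problem thus reduces to showing that an arbitrary slice $S(B_X,x^*,\alpha)$ contains some member of $\mathcal{S}$.

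Fix such a slice; rescaling $x^*$ I may assume $\|x^*\|=1$, so that $\sup\re x^*(B_X)=1$ and $S(B_X,x^*,\alpha)=\{x\in B_X\dopu \re x^*(x)>1-\alpha\}$. First I would pick a near-maximiser $x_0\in B_X$ with $\re x^*(x_0)>1-\alpha/8$ and use $P_Nx_0\to x_0$ to fix $N$ with $\re x^*(P_Nx_0)>1-\alpha/4$. Writing $x^*_N:=x^*\circ P_N=\sum_{k\le N}x^*(e_k)\,e_k^*$ for the truncated (finitely supported) functional, this gives $\|x^*_N\|\ge\re x^*_N(x_0)=\re x^*(P_Nx_0)>1-\alpha/4$, while $\|x^*_N\|\le\|x^*\|=1$. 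The crucial point, and the step I expect to be the heart of the matter, is the following consequence of $1$-unconditionality: the operator $R_N:=2P_N-\Id$ multiplies each coordinate by a unimodular scalar ($+1$ on the first $N$ coordinates, $-1$ afterwards), hence is an isometry, so $R_Nx\in B_X$ whenever $x\in B_X$. Since $x^*(R_Nx)=2x^*_N(x)-x^*(x)$, we obtain for every $x\in B_X$ the inequality
$$\re x^*(x)=2\re x^*_N(x)-\re x^*(R_Nx)\ge 2\re x^*_N(x)-1.$$

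Consequently any $x\in B_X$ with $\re x^*_N(x)$ close to $\|x^*_N\|\approx1$ automatically has $\re x^*(x)$ close to $1$; concretely $\re x^*_N(x)>1-\alpha/2$ forces $\re x^*(x)>1-\alpha$. Thus a sufficiently thin slice determined by $x^*_N$ lies inside $S(B_X,x^*,\alpha)$, and replacing $x^*_N$ by a finitely supported $\psi$ with rational coefficients and $\|\psi-x^*_N\|$ small (possible precisely because $x^*_N$ has finite support) together with a small rational $\gamma$ produces a member $S(B_X,\psi,\gamma)\in\mathcal{S}$ with $S(B_X,\psi,\gamma)\subseteq S(B_X,x^*,\alpha)$; the remaining estimates are routine once the displayed inequality is in hand. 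This shows that $\mathcal{S}$ is determining and hence that $B_X$ is an SCD set. I would emphasise that the only genuine obstacle is the tail term: when the dual basis is not shrinking (for instance in $\ell_1$, where $x^*=(1,1,\dots)$ is at distance $1$ from every finitely supported functional) one cannot control $(x^*-x^*_N)(x)$ by making $\|x^*-x^*_N\|$ small. The sign-flip isometry $R_N$ circumvents this exactly because $1$-unconditionality bounds $\re x^*(R_Nx)$ by $1$ irrespective of the size of the tail, and this is precisely where the hypothesis on the basis enters.
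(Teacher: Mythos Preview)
Your proof is correct, and its core is the very same trick the paper uses: the sign-flip $R_N=2P_N-\Id$ is an isometry by $1$-unconditionality, which yields $\re x^*(x)\ge 2\re x^*(P_Nx)-1$ for every $x\in B_X$. Where you differ is in the packaging. The paper builds a determining family of \emph{relatively weakly open} sets $U(a,m)$ indexed by finitely supported vectors $a\in B_X$, and then invokes \cite[Proposition~2.18]{AvKaMaMeSh2} to pass from weak neighbourhoods to the slice definition of SCD. You work dually: your determining family consists of \emph{slices} indexed by finitely supported functionals, so you verify the primary definition of an SCD set directly and never need to cite the weak-neighbourhood characterisation. Both routes hinge on exactly the same inequality, applied in the same way; your version is marginally more self-contained, while the paper's version makes the connection to the dense set of finitely supported vectors in $B_X$ a little more visible.
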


\begin{proof}
Fix a countable dense subset $D \subset B_X$ consisting of vectors
with finite supports, and for every $a \in D$, $a = \sum_{k=1}^n a_ke_k$, select
the corresponding relative weak neighbourhoods
$$
U(a, m) = \biggl\{x=\sum_{k=1}^\infty x_ke_k \in B_X\dopu  \max_{j \leq n}|a_j - x_j|
 < \frac1m \biggr\}.
$$
Let us show that $\{U(a, m)\dopu  a \in D, \ m \in \N\}$ is a determining
collection
of weak neighbourhoods. Let $V \subset B_X$ be a  closed convex set that
intersects all the $U(a, m)$.
For fixed $f \in S_{X^*}$ and $\eps > 0$, we have to show that the slice
$$
S(B_X,f,\eps)= \{u \in B_X\dopu \re f(u) > 1 - \eps\}
$$
intersects $V$. To do so, take $a = \sum_{k=1}^n a_ke_k \in D \cap
S(B_X,f,\eps/4)$
and observe that there is an element $x=\sum_{k=1}^\infty x_ke_k \in V$
whose first $n$ coordinates are arbitrarily close to the corresponding $a_k$
so that we can assume $\|a - \sum_{k=1}^n x_ke_k\| < \eps/4$. Therefore, we have
$$
\re f \Bigl(\sum_{k=1}^n x_ke_k \Bigr)>\re f(a)-
\frac{\eps}{4}>1-\frac{\eps}{2}\,.
$$
Besides, by 1-unconditionality it is clear that
$\|\sum_{k=1}^n x_ke_k - \sum_{k=n+1}^\infty x_k e_k\| = \|x\| \leq 1$.
Hence, we can finally write
$$
\re f(x)=2\re f\Bigl(\sum_{k=1}^n x_ke_k\Bigr)-
\re f\Bigl(\sum_{k=1}^n x_ke_k - \sum_{k=n+1}^\infty x_k e_k \Bigr) > 1-\eps
$$
which gives $x\in V\cap S(B_X,f,\eps)$, finishing the proof.
\end{proof}

It is known that every Banach space $X$ with the
alternative Daugavet property
whose unit ball is an SCD set is lush
\cite[Theorem 4.4]{AvKaMaMeSh2}, so we have the following corollary.

\begin{cor} \label{cor1}
In the class of spaces with  $1$-unconditional bases the three properties
ADP, lushness and numerical index~$1$ are equivalent.
\end{cor}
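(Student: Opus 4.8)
The plan is to establish a cycle of three implications among the properties, so that each implies the next and the loop closes. Since two of the three arrows are already available as general facts valid for arbitrary Banach spaces, the work reduces to supplying the single remaining implication, which is exactly where the preceding theorem enters.

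First, I would recall that lushness implies numerical index~$1$ for every Banach space, by \cite[Proposition~2.2]{BKMW}; this gives the arrow \emph{lush} $\Rightarrow$ \emph{numerical index~$1$} with no hypothesis on the basis. Next, numerical index~$1$ implies the ADP in full generality, as noted in Section~\ref{sec2}: numerical index~$1$ forces every operator to satisfy \eqref{aDE}, and in particular every rank-one operator does, which is precisely the ADP. This yields \emph{numerical index~$1$} $\Rightarrow$ \emph{ADP}, again for arbitrary $X$.

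The only implication that genuinely uses the $1$-unconditional basis is \emph{ADP} $\Rightarrow$ \emph{lush}. Here I would combine two ingredients: by the preceding theorem, the unit ball $B_X$ of a space with a $1$-unconditional basis is an SCD set; and by \cite[Theorem~4.4]{AvKaMaMeSh2}, any Banach space with the ADP whose unit ball is SCD is lush. Applying these in turn shows that an ADP space with a $1$-unconditional basis is lush, which closes the cycle and proves the three properties equivalent.

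The main obstacle does not lie in the present corollary---whose proof is a short synthesis of prior results---but rather entirely in the preceding theorem, namely in verifying that the natural countable family of relative weak neighbourhoods built from a finitely supported dense subset of $B_X$ is determining; that verification, which crucially exploits $1$-unconditionality through the estimate $\|\sum_{k=1}^n x_ke_k - \sum_{k=n+1}^\infty x_ke_k\| \le 1$, is what renders the otherwise general SCD machinery of \cite{AvKaMaMeSh2} applicable in this setting.
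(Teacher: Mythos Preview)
Your proposal is correct and follows exactly the paper's approach: the two easy implications lush $\Rightarrow$ numerical index~$1$ $\Rightarrow$ ADP are general facts recalled in Section~\ref{sec2}, and the closing implication ADP $\Rightarrow$ lush is obtained by combining the preceding theorem (so that $B_X$ is SCD) with \cite[Theorem~4.4]{AvKaMaMeSh2}. The paper states this even more tersely, but the content is identical.
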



\subsection{The only separable r.i.\ sequence spaces with numerical
  index~$\boldsymbol1$ are $\boldsymbol{c_0}$ and $\boldsymbol{\ell_1}$}
First note that separable r.i.\ sequence spaces are nothing but Banach
spaces with 1-symmetric bases. So the results of the previous subsection
are applicable to this kind of spaces. We start with the separable case.

\begin{theorem}\label{thm-sequences}
Let $X$ be a separable r.i.\ space on $\N$. If $X$
is lush, then $X$ is $c_0$ or $\ell_1$.
\end{theorem}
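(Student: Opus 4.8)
The plan is to exploit lushness, which I may translate into a usable slice condition, and play it against the strong symmetry of an r.i.\ sequence space. Recall that $X$ sits between $\ell_1$ and $\ell_\infty$ with norm-one inclusions, and by Remark~\ref{rem-norma-finite-sums}(b) the norm is determined by its values on finitely supported vectors; separability forces order continuity, so $X^*=X'$ is itself an r.i.\ space and lushness hands me functionals in $S_{X^*}=S_{X'}$ with good properties. The key idea I would pursue is that $1$-symmetry severely restricts how the norm of a sum of two disjointly supported blocks can behave: for disjoint $u,v$ one always has $\max(\|u\|,\|v\|)\leq\|u+v\|\leq\|u\|+\|v\|$, and the whole point is that either extreme forces the space to be $c_0$ or $\ell_1$ respectively. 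So the strategy is to use lushness to show that one of these two inequalities is \emph{always} an equality, and then to deduce the conclusion.

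\emph{First} I would apply lushness to cleverly chosen pairs $x,y\in S_X$ built from the symmetric basis. A natural test case is $x=e_1$ and $y=e_2$ (or more elaborate disjointly supported normalised blocks). Lushness produces, for each $\eps>0$, a functional $x^*\in S_{X^*}$ with $\re x^*(e_1)>1-\eps$ and $\dist\bigl(e_2,\caconv(S(B_X,x^*,\eps))\bigr)<\eps$. \emph{Then} I would extract from the slice $S(B_X,x^*,\eps)$ elements that are nearly extremal for $x^*$ at the first coordinate yet whose absolutely convex combinations approximate $e_2$; because $x^*\in X'$ is a symmetric integral, $\re x^*(e_1)$ close to $1$ together with $\|x^*\|=1$ pins down the distribution of the coefficient sequence of $x^*$, and the near-attainment at $e_1$ forces the first coordinate of $x^*$ to be large. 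Pushing the slice elements through the norm estimate for two-block sums should yield, in the limit $\eps\to0$, an \emph{exact} additivity or an exact max-behaviour of $\|e_1+e_2\|$ (equivalently of $\|\ein_{\{1,2\}}\|$). By symmetry this propagates to $\|\ein_A\|$ for all finite $A$, giving either $\|\ein_A\|=|A|$ (the $\ell_1$ normalisation) or $\|\ein_A\|=1$ (the $c_0$/$\ell_\infty$ normalisation).

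\emph{Finally}, once the characteristic functions are controlled, I would upgrade from indicators to arbitrary finitely supported vectors. If $\|\ein_A\|=1$ for all finite $A$, the norm-one inclusions $\ell_\infty\supset X\supset\ell_1$ and a standard majorisation argument for symmetric norms force $\|\cdot\|_X=\|\cdot\|_\infty$ on finite supports, and order continuity (separability) then identifies $X$ with $c_0$. Symmetrically, $\|\ein_A\|=|A|$ together with the inclusion constants forces $\|\cdot\|_X=\|\cdot\|_1$, giving $X=\ell_1$. The dichotomy itself—that lushness permits \emph{only} the two extreme behaviours and nothing in between—is where I expect the real work to lie: I anticipate the main obstacle is ruling out ``intermediate'' lush norms, i.e.\ showing that if $1<\|e_1+e_2\|<2$ then the distance condition in the definition of lushness cannot be met, no matter how the witnessing functional $x^*$ is chosen. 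Handling the complex case (where $\T$ is the full circle and $\re$ appears in the slice) versus the real case ($\T=\{\pm1\}$) may require separate but parallel estimates, and this is the step I would budget the most care for.
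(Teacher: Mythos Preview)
Your plan has a genuine gap at exactly the place you yourself flag: the dichotomy. You hope that applying the slice definition of lushness to $x=e_1$, $y=e_2$ will force $\|e_1+e_2\|\in\{1,2\}$, but you give no mechanism for this, and I do not see one. The functional $x^*$ you obtain satisfies only $\re x^*(e_1)>1-\eps$; this says almost nothing about the remaining coordinates of $x^*$, and the elements of the slice $S(B_X,x^*,\eps)$ are correspondingly unconstrained. Worse, the witnessing $x^*$ may vary with $\eps$, so ``passing to the limit $\eps\to0$'' is not straightforward. A second, smaller gap: even if you knew $\|e_1+e_2\|=1$, the claim that ``by symmetry this propagates to $\|\ein_A\|$ for all finite $A$'' is not automatic---the fundamental sequence $\phi(n)=\|\ein_{\{1,\dots,n\}}\|$ of an r.i.\ space satisfies $\phi(2)=1$ without forcing $\phi(3)=1$, so you would have to re-run the lushness argument for each $n$.

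The paper bypasses all of this by invoking a structural consequence of lushness rather than the slice definition. For a \emph{separable} lush space, a theorem of Kadets--Mart\'{\i}n--Mer\'{\i}--Pay\'{a} guarantees that the set
\[
\mathcal{A}=\bigl\{x^*\in S_{X^*}\dopu |x^{**}(x^*)|=1 \text{ for every } x^{**}\in\ext[X^{**}]\bigr\}
\]
is norming for $X$. A short lemma (your Lemma~\ref{lemma-coordinates}) then shows that every $a^*\in\mathcal{A}$ has $|a^*(n)|\in\{0,1\}$ for all $n$. The dichotomy is now \emph{forced by cardinality}: either some $a_0^*\in\mathcal{A}$ has infinite support, in which case isometries of $X$ manufacture norm-one functionals $\sum_{n=1}^N\omega_ne_n'$ for every $N$ and $X=\ell_1$; or every $a^*\in\mathcal{A}$ has finite support, and a further argument with extreme points of $X^{**}$ pins this support down to a single coordinate, whence $\mathcal{A}\subset\T\{e_n'\}$ and $X=c_0$. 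The point is that the norming set $\mathcal{A}$ carries far more rigidity than a single slice-witness $x^*$, and that rigidity is what drives the proof; your outline does not access it.
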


For the proof of this result we need two easy lemmas. The second one, which
we state here for the readers' convenience, appears in \cite{LeMaMe}
for the wider class of Banach spaces with $1$-unconditional bases.

\begin{lemma}\label{lemma-isometries}
Let $X$ be a Banach space and let $x^*\in S_{X^*}$ be such that
$|x^{**}(x^*)|=1$ for every $x^{**}\in \ext[X^{**}]$. If
$J\dopu X^*\to X^*$ is an onto isometry then
$|x^{**}(Jx^*)|=1$ for every $x^{**}\in \ext[X^{**}]$.
\end{lemma}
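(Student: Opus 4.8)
The plan is to pass to the adjoint of $J$ and to exploit the fact that surjective isometries preserve extreme points. First I would introduce the adjoint operator $J^*\dopu X^{**}\to X^{**}$, which by definition satisfies $(J^*x^{**})(x^*)=x^{**}(Jx^*)$ for all $x^{**}\in X^{**}$ and $x^*\in X^*$. Since $J$ is a surjective isometry it is in particular bijective, so $J^{-1}$ exists and is itself a surjective isometry; taking adjoints gives $\|J^*\|=\|J\|=1$ together with $(J^*)^{-1}=(J^{-1})^*$, whence $\|(J^*)^{-1}\|=\|J^{-1}\|=1$. Therefore $J^*$ is again an onto isometry, this time of $X^{**}$.

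The key observation is then that an onto isometry of a Banach space restricts to an affine bijection of its closed unit ball onto itself, and that such maps carry extreme points to extreme points. Applying this to $J^*$, we conclude that $J^*$ maps $\ext[X^{**}]$ bijectively onto $\ext[X^{**}]$. In particular, for a fixed $x^{**}\in\ext[X^{**}]$ the element $J^*x^{**}$ again belongs to $\ext[X^{**}]$.

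It remains only to assemble these pieces. For $x^{**}\in\ext[X^{**}]$ we have $|x^{**}(Jx^*)|=|(J^*x^{**})(x^*)|$, and since $J^*x^{**}\in\ext[X^{**}]$ the hypothesis on $x^*$ (namely that $|y^{**}(x^*)|=1$ for every $y^{**}\in\ext[X^{**}]$) applied to $y^{**}=J^*x^{**}$ yields $|(J^*x^{**})(x^*)|=1$. As $x^{**}$ was arbitrary, this gives exactly the desired conclusion. I do not anticipate any genuine obstacle: the only points needing a modicum of care are the verification that the adjoint of a surjective isometry is again surjective (via $(J^*)^{-1}=(J^{-1})^*$) and the standard fact that affine homeomorphisms of the unit ball preserve its extreme points.
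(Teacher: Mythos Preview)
Your argument is correct and is exactly the approach taken in the paper, only spelled out in more detail: the paper's proof is the single sentence ``Just note that $J^*$ is an onto isometry on $X^{**}$ and thus $J^*(x^{**})\in \ext[X^{**}]$ for every $x^{**}\in \ext[X^{**}]$.'' Your expansion of why $J^*$ is an onto isometry and why such maps preserve extreme points fills in precisely the details the paper leaves implicit.
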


\begin{proof}
Just note that $J^*$ is an onto isometry on $X^{**}$ and
thus $J^*(x^{**})\in \ext[X^{**}]$ for every $x^{**}\in \ext[X^{**}]$.
\end{proof}

\begin{lemma}\cite[Lemma~3.2]{LeMaMe}\label{lemma-coordinates} Let
$X$ be an r.i.\ space on $\N$ and let $x^*\in S_{X^*}$ be such that
$|x^{**}(x^*)|=1$ for every $x^{**}\in \ext[X^{**}]$. Then,
$$
|x^*(n)|\in\{0,1\} \qquad \text{for every } n\in\N.
$$
\end{lemma}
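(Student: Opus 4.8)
The plan is to exploit the abundance of isometries on $X^*$ coming from the rearrangement invariant structure, combined with Lemma~\ref{lemma-isometries}. Fix $n\in\N$ and suppose $x^*(n)\neq0$; I will show that $|x^*(n)|=1$. For $\theta\in\T$, let $T_\theta\dopu X\to X$ be the map that multiplies the $n$-th coordinate by $\theta$ and leaves the others fixed. Since $|T_\theta f|=|f|$ pointwise, solidity forces $\|T_\theta f\|=\|f\|$, so $T_\theta$ is an onto isometry; hence $J:=T_\theta^*$ is an onto isometry of $X^*$. A direct computation on coordinates (using the limit formula in Remark~\ref{rem-norma-finite-sums}(b), so as to avoid any appeal to $(e_n)$ being a Schauder basis) gives
$$
J x^* = x^* + (\theta-1)\,x^*(n)\,e_n'.
$$
By Lemma~\ref{lemma-isometries}, $|x^{**}(Jx^*)|=1$ for every $x^{**}\in\ext[X^{**}]$.

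Now fix $x^{**}\in\ext[X^{**}]$ and abbreviate $s=x^{**}(x^*)$ and $w=x^*(n)\,x^{**}(e_n')$. The hypothesis together with the previous step yields $|s|=1$ and, since $x^{**}(Jx^*)=s+(\theta-1)w$,
$$
|s+(\theta-1)w|=1 \qquad (\theta\in\T).
$$
As $\theta$ runs over $\T$, the points $s+(\theta-1)w$ describe a circle of radius $|w|$ centred at $s-w$ (in the real case this is merely the two points $s$ and $s-2w$). For all of them to have modulus $1$, the circle must be degenerate or coincide with the unit circle; that is, either $w=0$ or ($s-w=0$ and $|w|=1$). In both cases $w\in\{0\}\cup\{s\}$, so for every extreme point one has
$$
x^*(n)\,x^{**}(e_n')=0 \qquad\text{or}\qquad |x^*(n)\,x^{**}(e_n')|=1.
$$

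It remains to produce one extreme point that does not annihilate $e_n'$. The inclusions $\ell_1\subset X\subset\ell_\infty$ with norm-one inclusion maps give $\|e_n\|_X=1$ and $\|e_n'\|_{X^*}=1$, whence $|x^*(n)|=|x^*(e_n)|\leq1$ and $|x^{**}(e_n')|\leq1$ for every $x^{**}\in B_{X^{**}}$. Since $x^{**}\mapsto|x^{**}(e_n')|$ is a convex and weak$^*$-continuous function on the weak$^*$-compact convex set $B_{X^{**}}$ whose supremum equals $\|e_n'\|_{X^*}=1$, Bauer's maximum principle provides $x_0^{**}\in\ext[X^{**}]$ with $|x_0^{**}(e_n')|=1\neq0$. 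As $x^*(n)\neq0$, the product $x^*(n)\,x_0^{**}(e_n')$ is nonzero, so the dichotomy of the previous paragraph forces $|x^*(n)|\,|x_0^{**}(e_n')|=1$; combined with $|x_0^{**}(e_n')|\leq1$ and $|x^*(n)|\leq1$ this yields $|x^*(n)|=1$, as desired. I expect the two delicate points to be the verification of the coordinate form of $Jx^*$ (which must rest on the limit formula rather than on a basis expansion, so that the argument also covers non-separable spaces) and the existence of an extreme point not killing $e_n'$; both are handled above, the latter via Bauer's principle.
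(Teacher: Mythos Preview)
Your proof is correct. Note, however, that the paper does not give its own proof of this lemma; it is simply quoted from \cite{LeMaMe} for the reader's convenience, so there is no argument in the present paper to compare against. Your route via the coordinate sign-change isometries $T_\theta$ combined with Lemma~\ref{lemma-isometries} and Bauer's maximum principle is natural and complete.

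One small comment on the point you flagged as delicate: the identity $Jx^* = x^* + (\theta-1)\,x^*(n)\,e_n'$ does not actually need the limit formula of Remark~\ref{rem-norma-finite-sums}(b). For any $f\in X$ one has the pointwise equality $T_\theta f = f + (\theta-1)f(n)\,e_n$ in $X$, and applying $x^*$ gives $(T_\theta^*x^*)(f) = x^*(f) + (\theta-1)\,x^*(n)\,e_n'(f)$ directly, for \emph{every} $x^*\in X^*$ (integral or not) and every $f\in X$. So your worry about the non-separable case is well placed, but the fix is simpler than you indicate. The second delicate point, producing an extreme $x_0^{**}$ with $|x_0^{**}(e_n')|=1$, is handled correctly; one could equally observe that $\{x^{**}\in B_{X^{**}}\dopu x^{**}(e_n')=1\}$ is a nonempty weak$^*$-closed face of $B_{X^{**}}$ and hence contains an extreme point of the ball.
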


\begin{proof}[Proof of Theorem~\ref{thm-sequences}]
Since $X$ is a separable lush space, Theorem~4.3 in \cite{KMMP} tells us that the set
$$
\mathcal{A}=\{x^*\in S_{X^*} \dopu  |x^{**}(x^*)|=1 \text{ for every } x^{**}\in\ext[X^{**}]\}
$$
is norming for $X$. By Lemma~\ref{lemma-coordinates} one has
$|a^*(n)|\in\{0,1\}$ for every $a^*\in \mathcal{A}$ and every $n\in \N$.
Therefore, we can split the proof into the following two cases:

$(1)$ \emph{There is an $a^*_0\in \mathcal{A}$ such that the set
$\text{I}=\{n\in\N \dopu |a^*_0(n)|=1\}$ is infinite}.
In this case $X$ is isometrically isomorphic to $\ell_1$. Indeed, for
fixed $x\in B_X$ and $N\in\N$, consider $\omega_n\in\T$  such that
$\omega_nx(n)=|x(n)|$ for every $n=1,\dots,N$ and define
$x^*=\sum_{n=1}^N \omega_ne_n'$. Next, take a bijection
$\tau\dopu \N\to \N$ such that $\tau(\{1,\dots,N\})\subset I$
and consider the onto isometry $J\in L(X)$ given by $Jx=x\tau^{-1}$ ($x\in X$).
Then, $J^*\in L(X^*)$ is an onto isometry such that
$$
\{1,\dots,N\}\subset\{n\in\N \dopu  |(J^*a^*_0)(n)|=1 \}.
$$
Therefore, we can write
$$
1\leq\|x^*\|\leq\|J^*a^*_0\|=1
$$
and so $\|x^*\|=1$. Finally, it suffices to observe that
$$
\sum_{n=1}^N|x(n)|=|x^*(x)|\leq \|x\|
$$
which tells us that $x\in \ell_1$ and $\|x\|_1\leq\|x\|$
(the reversed inequality is always true).

$(2)$ \emph{For every $a^*\in \mathcal{A}$ the set $\{n\in\N \dopu
|a^*(n)|=1\}$ is finite.}
In this case we will show that $X$ is isometrically isomorphic to
$c_0$. For
fixed $a^*\in \mathcal{A}$ we are first going to show that
$\#\supp(a^*)=1$. Using
Lemma~\ref{lemma-isometries} we can assume, up to isometry, that
$a^*(n)=1$ for every $n\in\supp(a^*)$. Take $x^{**}\in \ext[X^{**}]$
satisfying $x^{**}(a^*)=1$ and observe that $x^{**}(e_n')\geq0$ for
every $n\in \supp(a^*)$. We claim that $\{x^{**}(e_n')\}_{n\in\N}$ is constant
on the support of $a^*$. Indeed, fix $j,k\in \supp(a^*)$ and
$m\notin \supp(a^*)$, take $\omega\in \T$ satisfying $\omega
x^{**}(e_m')=|x^{**}(e_m')|$, and define $a^*_j,a^*_k\in \mathcal{A}$ by
$$
a^*_j=\omega e_m'+\sum_{n\in\supp(a^*)\setminus\{j\}}e_n' \qquad
\text{and} \qquad a^*_k=\omega
e_m'+\sum_{n\in\supp(a^*)\setminus\{k\}}e_n'
$$
(observe that $a^*_j, a^*_k\in \mathcal{A}$ by Lemma~\ref{lemma-isometries}).
Then we can write
\begin{align*}
|x^{**}(e_m')|+\sum_{n\in\supp(a^*)\setminus\{j\}}x^{**}(e_n')
&=|x^{**}(a^*_j)|
=1
=|x^{**}(a^*_k)| \\
&=|x^{**}(e_m')|+\sum_{n\in\supp(a^*)\setminus\{k\}}x^{**}(e_n')
\end{align*}
and, therefore, $x^{**}(e_j')=x^{**}(e_k')$. So $x^{**}(e_n')=\frac{1}{\#\supp(a^*)}$ for every
$n\in\supp(a^*)$. Now it is clear that $\#\supp(a^*)=1$:  otherwise
there are $j\neq k$ in $\supp(a^*)$; we define $\tilde a^*\in \mathcal{A}$ by
$$
\tilde a^*=e_k'-\sum_{n\in \supp(a^*)\setminus\{k\}}e_n'
$$
and we observe that
$$
1=|x^{**}(\tilde
a^*)|=\Bigl|x^{**}(e_k')-\sum_{n\in\supp(a^*)
\setminus\{k\}}x^{**}(e_n')\Bigr|=\frac{\#\supp(a^*)-2}{\#\supp(a^*)},
$$
which is impossible.

Finally, since $\mathcal{A}$ is norming for $X$ we have that
$$
B_{X^*}=\caconv^{\,w^*}(\mathcal{A})=\caconv^{\,w^*}(\{e_n'\dopu  n\in\N\})
$$
and thus, $\|x\|=\sup\{|x(n)|\dopu n\in\N\}$ for every $x\in X$. Since $X$
is the closed
linear span of $\{e_n\dopu  n\in \N\}$ we deduce that $X$ is isometric to
$c_0$, finishing the proof.
\end{proof}

The last theorem together with Corollary~\ref{cor1} gives
the result announced in the title of the  subsection.

\begin{cor}\label{cor2}
The only separable r.i.\  sequence spaces with numerical index~$1$ are
$c_0$ and $\ell_1$.
The same spaces are the only examples of separable r.i.\  sequence
spaces with the ADP.
\end{cor}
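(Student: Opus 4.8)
The plan is to deduce this directly from Theorem~\ref{thm-sequences} and Corollary~\ref{cor1}, so that the work is essentially bookkeeping. First I would record that a separable r.i.\ sequence space $X$ carries a $1$-symmetric basis $(e_n)$, and that a $1$-symmetric basis is in particular $1$-unconditional; thus $X$ falls within the scope of Corollary~\ref{cor1}, as was already noted at the start of the subsection. That corollary asserts that for such $X$ the three properties---ADP, lushness and numerical index~$1$---are mutually equivalent. Consequently, to prove both assertions of the statement at once it suffices to determine the separable r.i.\ sequence spaces that are lush.

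Next I would invoke Theorem~\ref{thm-sequences}, which says precisely that a lush separable r.i.\ space on $\N$ must be $c_0$ or $\ell_1$. Combining this with the equivalence above, every separable r.i.\ sequence space with numerical index~$1$, and likewise every one with the ADP, is isometrically $c_0$ or $\ell_1$. This yields the ``only'' halves of both claims simultaneously.

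For the converse I would check that $c_0$ and $\ell_1$ genuinely have the stated properties. These are among the classical examples of spaces with numerical index~$1$ (see \cite{D-Mc-P-W}), and since numerical index~$1$ implies the ADP, both properties hold for each of them. Alternatively, one may observe that $c_0$ and $\ell_1$ are lush and apply Corollary~\ref{cor1} in the reverse direction to recover numerical index~$1$ and the ADP.

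There is no genuine obstacle once Theorem~\ref{thm-sequences} and Corollary~\ref{cor1} are in hand; the only points demanding a moment's care are the passage from $1$-symmetric to $1$-unconditional bases (so that Corollary~\ref{cor1} may be applied) and the routine verification of the converse inclusion. It is worth flagging that the full \DP\ plays no role here: as remarked at the beginning of the section, r.i.\ sequence spaces never enjoy it because they admit rank-one unconditional projections, which is exactly why the statement is phrased in terms of numerical index~$1$ and the ADP rather than the \DP.
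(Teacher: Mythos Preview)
Your proposal is correct and follows exactly the paper's own reasoning: the paper simply states that ``the last theorem together with Corollary~\ref{cor1} gives the result,'' and you have spelled out precisely that deduction, including the observation (already made in the subsection's opening paragraph) that separable r.i.\ sequence spaces have $1$-symmetric, hence $1$-unconditional, bases so that Corollary~\ref{cor1} applies.
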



\subsection{The only non-separable r.i.\  sequence space with
  numerical index 1 is  $\boldsymbol{\ell_\infty}$}

\begin{theorem}\label{thm-non-sepADP}
Let $X$ be a non-separable r.i.\  space on $\N$. If $X$ has the ADP, then
$X$ is $\ell_\infty$.
\end{theorem}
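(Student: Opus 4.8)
The plan is to realise $X$ isometrically as the dual of a \emph{separable} r.i.\ space and then appeal to the classification already obtained in Corollary~\ref{cor2}. Since $X$ is non-separable it cannot be the closed linear span of its simple (finitely supported) functions, because that span is separable; hence condition~$(2)$ in the definition of an r.i.\ space forces $X=X''$, i.e.\ $X$ coincides with its K\"othe bidual. Put $Y=X'$, which is again an r.i.\ space on $\N$, and let $Y_0$ denote its order continuous part, that is, the closed linear span of the finitely supported sequences in $Y$. By Remark~\ref{rem-norma-finite-sums}(a), $Y_0$ is a separable r.i.\ space.

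The main structural step is the isometric identification $Y_0^*=X$. First, since $Y_0$ is order continuous, every functional on it is an integral, so $Y_0^*=Y_0'$. Next I would observe that the K\"othe dual depends only on the norm of the finite sequences, so that $Y_0'=Y'$: for $g\in Y_0'$ and $f\in B_Y$ the truncations $\sum_{k=1}^n f_ke_k$ lie in $B_{Y_0}$ by solidity, whence $\sum_{k=1}^n|f_kg_k|\leq\|g\|_{Y_0'}$, and letting $n\to\infty$ gives $g\in Y'$ with $\|g\|_{Y'}\leq\|g\|_{Y_0'}$; the reverse inequality is immediate from $Y_0\subset Y$. Finally $Y'=(X')'=X''=X$, so $X=Y_0^*$ isometrically, with the duality implemented by the integral pairing.

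It then remains to push the ADP from $X$ down to its predual $Y_0$, using that the ADP passes from a dual space to its predual. Indeed, if $T\dopu Y_0\to Y_0$ is rank one then its adjoint $T^*\dopu Y_0^*\to Y_0^*$ is again rank one, and since $X=Y_0^*$ has the ADP, $T^*$ satisfies~\eqref{aDE}. Because an operator and its adjoint share the same norm and the same numerical radius \cite{D-Mc-P-W}, the equivalence between~\eqref{aDE} and the equality $v(\,\cdot\,)=\|\cdot\|$ shows that $T$ itself satisfies~\eqref{aDE}. Thus every rank-one operator on $Y_0$ satisfies~\eqref{aDE}, i.e.\ $Y_0$ has the ADP. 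By Corollary~\ref{cor2} we conclude $Y_0=c_0$ or $Y_0=\ell_1$, whence $X=Y_0^*$ equals $\ell_1$ or $\ell_\infty$; as $\ell_1$ is separable, the non-separability of $X$ leaves only $X=\ell_\infty$.

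I expect the genuine obstacle to be the second paragraph: extracting the correct separable predual from the K\"othe-duality formalism and verifying both $X=X''$ and $Y_0'=Y'$, rather than the transfer of the ADP, which is the short adjoint argument above. It is worth stressing that one must use the ADP precisely in its ``dual-to-predual'' direction. The ADP is in general strictly weaker than numerical index~$1$, so no numerical-index duality inequality is available to us; what saves the argument is that the adjoint of a rank-one operator is again rank one, which is exactly the feature that lets the ADP descend from $X=Y_0^*$ to $Y_0$.
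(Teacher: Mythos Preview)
Your argument is correct, and it takes a genuinely different route from the paper's. The paper stays inside $X$: it passes the ADP from $X$ to the separable subspace $E=\overline{\linspan}\{e_n\dopu n\in\N\}\subset X$ by a direct slice argument (Lemma~\ref{lemm-subspace}), applies Corollary~\ref{cor2} to get $E=c_0$ or $E=\ell_1$, and then finishes with a hands-on computation using Remark~\ref{rem-norma-finite-sums}(b) and solidity to rule out $\ell_1$ and to force $\ell_\infty\subset X$ when $E=c_0$. You instead go to the dual side: you realise $X$ as $Y_0^*$ for the separable r.i.\ space $Y_0=\overline{\linspan}\{e_n\}\subset X'$, push the ADP from $X$ down to its predual via the rank-one adjoint trick together with $v(T)=v(T^*)$, and then read off $X=c_0^*$ or $X=\ell_1^*$ directly.

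Your approach trades the elementary slice lemma and the endgame identification of $X$ with $\ell_\infty$ for a bit more structure theory (the verifications $X=X''$, $Y_0'=Y'$, and that $Y_0$ really is a separable r.i.\ space; the last is routine since the closure of the finitely supported sequences in a solid sequence space is always order continuous). The payoff is that once $Y_0$ is classified, the conclusion is immediate with no further work. The paper's route is more self-contained but has to do that extra work at the end; yours is slicker once the duality is set up. Both reductions to Corollary~\ref{cor2} are valid, just through different separable r.i.\ spaces: the paper uses the order continuous part of $X$, you use the order continuous part of $X'$.
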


We need a preliminary result whose proof is borrowed from \cite[Theorem 1.1]{AcKaMa}.

\begin{lemma}\label{lemm-subspace}
Let $X$ be an r.i.\  space on $\N$. Denote by $E$ the closed linear span
of the set of canonical
basis vectors $e_n$, $n \in \N$. If $X$ has the ADP, then
$E$ also has the ADP.
\end{lemma}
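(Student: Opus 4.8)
The plan is to verify the slice characterisation of the ADP recalled in Section~\ref{sec2} for $E$, transplanting an ADP witness from $X$ into the subspace $E$. So fix $x \in S_E$, $x^* \in S_{E^*}$ and $\eps > 0$; I must produce $y \in S_E$ with $\re x^*(y) > 1 - \eps$ and $\max_{\theta \in \T}\|\theta x + y\| > 2 - \eps$. Since $E$ is the closed span of the $e_n$ and is order continuous (hence separable), I may first replace $x$ by a finitely supported norm-one vector $x_0$ with $\|x - x_0\|$ as small as I like; a routine perturbation of the two displayed inequalities shows it suffices to treat a finitely supported $x$, say with $\supp x \subset \{1, \dots, M\}$.

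Next I extend $x^*$ to a functional on $X$ in a way that remembers the coordinate structure. Because $E$ is order continuous, $x^*$ is an \emph{integral}: there is a sequence $g = (g_n)$ with $x^*(f) = \sum_n g_n f_n$ for $f \in E$. The same formula defines a functional on all of $X$: for $f \in X$ one checks $\sum_n |g_n f_n| \leq \|x^*\|\,\|f\|$ by applying $x^*$, after suitable unimodular twists, to the truncations of $f$, which lie in $B_E$ with norm at most $\|f\|$ by solidity. Hence $g$ extends $x^*$ to an element of $S_{X^*}$, and, crucially, it acts as an absolutely convergent sum on every element of $X$. Applying the ADP of $X$ to $x \in S_X$ and $g \in S_{X^*}$ with tolerance $\eps/2$ yields $y \in S_X$ with $\re g(y) > 1 - \eps/2$ and, for some $\theta_0 \in \T$, $\|\theta_0 x + y\| > 2 - \eps/2$.

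The vector $y$ need not lie in $E$, so I truncate it: put $y^{(N)} = \sum_{n \leq N} y_n e_n \in E$. The functional is harmless here precisely because $g$ is an integral, so $\re g(y^{(N)}) = \re \sum_{n \leq N} g_n y_n \to \re g(y) > 1 - \eps/2$ as $N \to \infty$. The genuine obstacle is the \emph{norm} condition: in the non-separable space $X$ the tail $y - y^{(N)}$ does not tend to zero in norm, so $\|\theta_0 x + y^{(N)}\|$ cannot be estimated by $\|\theta_0 x + y\| - \|y - y^{(N)}\|$. This is exactly where the r.i.\ structure enters through Remark~\ref{rem-norma-finite-sums}(b), which says that for \emph{every} $w \in X$ one has $\|w\| = \lim_N \|w^{(N)}\|$. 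Applying this to $w = \theta_0 x + y$, and using $\supp x \subset \{1,\dots,N\}$ for $N \geq M$ so that $w^{(N)} = \theta_0 x + y^{(N)}$, gives $\|\theta_0 x + y^{(N)}\| \to \|\theta_0 x + y\| > 2 - \eps/2$; the same remark applied to $y$ gives $\|y^{(N)}\| \to 1$, while solidity gives $\|y^{(N)}\| \leq 1$.

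It remains to assemble these facts for $N$ large, where $\|y^{(N)}\| \in (1 - \eps/2,\, 1]$. I set $z := y^{(N)}/\|y^{(N)}\| \in S_E$. Dividing by a number at most $1$ only increases $\re g(y^{(N)}) > 0$, so $\re x^*(z) = \re g(z) \geq \re g(y^{(N)}) > 1 - \eps$; and $\|\theta_0 x + z\| \geq \|\theta_0 x + y^{(N)}\| - \bigl|\,1 - \|y^{(N)}\|\,\bigr| > 2 - \eps$ for $N$ large. Thus $z$ witnesses the ADP inequality for $x$ in $E$, completing the verification. The one step requiring real care is the passage from $y$ to its truncation, where Remark~\ref{rem-norma-finite-sums}(b) and the integral representation of $x^*$ are indispensable; the rest is bookkeeping with the tolerances.
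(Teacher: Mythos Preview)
Your argument is correct and follows essentially the same route as the paper's proof: represent $x^*$ by a sequence, extend it as an integral functional to all of $X$ with the same norm, apply the ADP of $X$, and then truncate the resulting vector using Remark~\ref{rem-norma-finite-sums}(b). Your version is somewhat more explicit---you first reduce to a finitely supported $x$ (which the paper handles implicitly via $x^{(n)}\to x$ in $E$) and you normalise at the end to land in $S_E$ rather than $B_E$---but these are cosmetic differences rather than a genuinely different method.
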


\begin{proof}
Fix $x \in S_E$, $f \in S_{E^*}$  and $\eps > 0$. Our goal is to
find a $y \in B_E$ with
\begin{equation} \label{a1}
|f(y)| > 1 - \eps \quad {\textrm{and}} \quad
\max_{\theta\in\T}\|\theta x + y\| > 2 - \eps.
\end{equation}
First remark that $f$ can be considered as a sequence of scalars
$f = (f_1,f_2, \ldots )$ that acts on arbitrary $z = (z_1, z_2, \ldots)$
by
\begin{equation} \label{a2}
f(z) = \sum_{k=1}^\infty f_k z_k.
\end{equation}
By the same formula (\ref{a2}), $f$ defines a linear functional on $X$
with $\|f\|_{X^*} = \|f\|_{E^*} = 1$. Since $X$ has the ADP, there is a
 $z = (z_1, z_2, \ldots) \in B_X$ with
$$
|f(z)| > 1 - \eps \quad {\textrm{and}}
\quad \max_{\theta\in\T}\|\theta x + z\| > 2 - \eps.
$$
One may now select $n \in \N$ big enough to fulfill
$\max_{\theta\in\T}\|\sum_{k=1}^n z_k e_k  + \theta x\|> 2 - \eps$ and
$|\sum_{k=1}^n z_k f_k| >  1 - \eps $. Then $y := \sum_{k=1}^n z_k e_k \in E$
fulfills the conditions~(\ref{a1}).
\end{proof}

\begin{proof}[Proof of Theorem~\ref{thm-non-sepADP}]
According to Lemma~\ref{lemm-subspace}, the subspace $E \subset
X$ spanned by the canonical basis vectors $e_n$ has the ADP.
Since $E$ is a separable  r.i.\  space on $\N$, $E$ must be
either $c_0$ or $\ell_1$ by Corollary~\ref{cor2}. When
$E=\ell_1$, for fixed $x\in X$ one has that
$$
\|x\|=\lim_{n}\Bigl\|\sum_{k=1}^n x_ke_k\Bigr\|=\lim_{n}\sum_{k=1}^n|x_k|
$$
by Remark~\ref{rem-norma-finite-sums}.b, and so $X\subset
\ell_1$ contradicting the non-separability of $X$.

When $E = c_0$, we fix $x\in X$ and use again
Remark~\ref{rem-norma-finite-sums}.b to deduce that
$$
\|x\|= \sup \{|x_n|\dopu n\in \N\}
$$
and then $X\subset \ell_\infty$ isometrically. So it remains to
show that every element of $\ell_\infty$ lies in $X$. Since $X$
is solid, it suffices to check that the element
$(1,1,1,\ldots)$ lies in $X$. Let $x=(x_1,x_2,\ldots)\in
X\setminus c_0$ be such that $x_n\geq0$ for every $n\in \N$ and
$\limsup x_n>1$. Using this and rearranging $x$ we may suppose
without loss of generality that $x_{2n}>1$ for every $n\in \N$.
Therefore, using again the solidity, we get that
$(0,1,0,1,\ldots)\in X$ and, by symmetry, that
$(1,0,1,0,\ldots) \in X$. Finally, we can write
$$
(1,1,1,1,\ldots)=(0,1,0,1,\ldots)+(1,0,1,0,\ldots) \in X
$$
which finishes the proof.
\end{proof}

\section{Symmetric function spaces  on [0,1]}
\label{sec4}

\subsection{Lushness in separable r.i.\   function spaces}

Our next goal is to prove a result similar to Theorem~\ref{thm-sequences}
for rearrangement
invariant function spaces on $[0,1]$. To do so, we need the following
easy lemma.

\begin{lemma}\label{lemma-positive-functional}
Let $X$ be a K\"{o}the function space and $f\in S_X$ with $f \geq 0$. If
$x^*\in S_{X^*}$ satisfies $x^*(f)=1$ then $x^*$ is positive on the
subspace $X_f=\{g\in X\dopu   |g|\leq cf $ for some $c>0\}$.
\end{lemma}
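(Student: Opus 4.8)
The plan is to prove that the functional $x^*$ restricted to the subspace $X_f$ is positive by exploiting the fact that $f$ is a norming element for $x^*$ and that $f$ dominates every element of $X_f$ in the lattice sense. First I would fix a nonnegative $g \in X_f$ with $\|g\| \leq 1$ and aim to show $x^*(g) \geq 0$; by linearity and scaling this suffices to establish positivity on all of $X_f$.

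The key idea is to use perturbations of $f$ of the form $f \pm t g$ for small $t > 0$, or more precisely to compare the value $x^*(f)$ against $x^*(f - tg)$. Since $g \in X_f$, there is $c > 0$ with $0 \leq g \leq cf$, so for $t \in (0, 1/c]$ the function $f - tg$ satisfies $0 \leq f - tg \leq f$, hence by solidity of the K\"othe space $\|f - tg\| \leq \|f\| = 1$. Because $x^* \in S_{X^*}$, we get
$$
1 - t\,x^*(g) = x^*(f) - t\,x^*(g) = x^*(f - tg) \leq \|f - tg\| \leq 1,
$$
which immediately forces $t\,x^*(g) \geq 0$, and therefore $x^*(g) \geq 0$. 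This is the entire content of the argument: the solidity axiom turns the lattice inequality $0 \leq f - tg \leq f$ into a norm inequality, and the norming condition $x^*(f) = \|f\| = 1$ converts that norm bound into a sign constraint on $x^*(g)$.

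Finally I would extend this to general $g \in X_f$ with $g \geq 0$ (not merely those with $g \leq f$) by the scaling already built into the definition of $X_f$: if $0 \leq g \leq cf$ then $g/c \leq f$, the displayed computation gives $x^*(g/c) \geq 0$, and hence $x^*(g) \geq 0$. I expect no serious obstacle here; the only point requiring a moment's care is the choice of the range of $t$ so that $f - tg$ stays nonnegative, which is handled by the domination constant $c$. The proof is essentially a one-line consequence of solidity together with the norming equality, so the main task is simply to record these steps cleanly.
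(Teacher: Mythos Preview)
Your argument is clean and correct over the real scalars, but the lemma (and the paper) is stated for real \emph{or} complex K\"othe spaces, and your key inequality
\[
x^*(f - tg) \leq \|f - tg\|
\]
presupposes that $x^*(f - tg)$ is a real number. In the complex case you only know $|x^*(f - tg)| \leq \|f - tg\|$, i.e.\ $|1 - t\,x^*(g)| \leq 1$ for small $t>0$, and this yields only $\re x^*(g) \geq 0$; it does not rule out a nonzero imaginary part. (For example, $|1 - tz| \leq 1$ for all small $t>0$ holds for every $z$ with $\re z > 0$.) Positivity of a functional on a complex lattice means that $x^*(g)$ is a nonnegative \emph{real} number whenever $g \geq 0$, so the argument as written is incomplete.

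The paper closes this gap by using all unimodular scalars at once: for $0 \leq g \leq f$ one has $|g + \theta(f-g)| \leq g + (f-g) = f$ for every $\theta \in \T$, hence $|x^*(g) + \theta(1 - x^*(g))| \leq 1$ for every $\theta$, which gives $|x^*(g)| + |1 - x^*(g)| \leq 1$ and forces $x^*(g) \in [0,1] \subset \R$. Your perturbation idea is in the same spirit, but the single direction $f - tg$ is not enough over~$\C$; you would need to perturb by $\theta g$ for all $\theta \in \T$, which brings you essentially to the paper's proof.
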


\begin{proof}
Let $0\leq g\in X_f$; multiplying by a suitable constant we can assume
without loss of generality that $0\leq g\leq f$. Therefore, we have
for all scalars $\theta\in\T$
$$
|g+\theta(f-g)|\leq |g| + |f-g| = f
$$
and consequently
$\|g+\theta(f-g)\|\leq \|f\|=1$. Therefore
$$
|x^*(g) + \theta (1-x^*(g))|\leq1
$$
 for all $\theta\in\T$,
and it follows that
$$
|x^*(g)| + |1-x^*(g)|\leq1
$$
which means that $x^*(g)$ is a real number in the interval $[0,1]$.
\end{proof}

We can now present the promised result.

\begin{theorem}
Let $X$ be a separable r.i.\ space on $[0,1]$. If
$X$ is lush, then $X=L_1[0,1]$.
\end{theorem}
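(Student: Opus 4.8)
The plan is to follow the strategy of Theorem~\ref{thm-sequences} as closely as possible, replacing the coordinate functionals $e_n'$ by characteristic functions of small sets. Since $X$ is separable, it is order continuous and $X^* = X'$, so every functional is an integral. As in the sequence case, lushness and separability give (via \cite[Theorem~4.3]{KMMP}) that the set
\begin{equation}\label{eq:Aset}
\mathcal{A}=\{x^*\in S_{X^*} \dopu |x^{**}(x^*)|=1 \text{ for every } x^{**}\in\ext[X^{**}]\}
\end{equation}
is norming for $X$. The aim is to show that every $a^*\in\mathcal{A}$ has constant modulus $1$ almost everywhere on $[0,1]$; once this is established, the fact that $\mathcal{A}$ is norming forces $\|x\|=\int_0^1|x|\,d\mu$ for all $x\in X$, which identifies $X$ with $L_1[0,1]$.

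First I would prove the function-space analogue of Lemma~\ref{lemma-coordinates}: if $a^*\in\mathcal{A}$, then $|a^*|$ takes only the values $0$ and $1$ almost everywhere. The idea is to exploit the measure-preserving automorphisms of $[0,1]$ together with Lemma~\ref{lemma-isometries}. For a measurable set $A$ of positive measure, restricting attention to $A$ and using rearrangements that permute pieces of $A$, I would argue that $|a^*|$ cannot take two distinct positive values on sets of positive measure; the solidity and rearrangement invariance of the norm, combined with the extreme-point characterisation defining $\mathcal{A}$, should pin the modulus down to $\{0,1\}$. This is the direct translation of the argument in \cite[Lemma~3.2]{LeMaMe} from the atomic to the non-atomic setting.

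Next, using Lemma~\ref{lemma-positive-functional} and Lemma~\ref{lemma-isometries}, I would normalise so that $a^*=\ein_S$ (as an integral) on its support $S=\supp(a^*)$, and then show that in fact $\mu(S)=1$, i.e.\ the support is all of $[0,1]$. Here is where the measure-theoretic version replaces the counting argument of case~$(2)$ in Theorem~\ref{thm-sequences}: picking $x^{**}\in\ext[X^{**}]$ with $x^{**}(a^*)=1$, I would use the density given by $x^{**}$ on characteristic functions of subsets of $S$ and compare functionals obtained by swapping a sub-piece of $S$ for a piece of its complement (exactly as the $a_j^*, a_k^*$ trick does for coordinates). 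Rearrangement invariance makes the relevant density uniform on $S$, and if $\mu(S)<1$ a contradiction of the same flavour as in the sequence case emerges.

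The main obstacle I anticipate is the transition from finite sums of coordinate functionals to genuine integral manipulations over sets of positive measure: the convexity/extreme-point bookkeeping that is transparent with $\#\supp(a^*)$ in the denominator must be rephrased in terms of $\mu(S)$, and one has to be careful that the functionals built by swapping pieces really lie in $\mathcal{A}$, which requires invoking Lemma~\ref{lemma-isometries} for the induced isometries $Jx=x\tau^{-1}$. Once every $a^*\in\mathcal{A}$ is shown to equal $\pm\ein_{[0,1]}$ (up to the modulus-one twist), the norming property of $\mathcal{A}$ yields $\|x\|\geq\int_0^1|x|\,d\mu$, while property~$(3)$b of r.i.\ spaces gives the reverse inequality $\int_0^1|x|\,d\mu=\|x\|_1\leq\|x\|_X$ together with $\|x\|_X\le\|x\|_1$ for the identification; concluding $X=L_1[0,1]$ isometrically.
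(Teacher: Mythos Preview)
Your outline shares the paper's starting point and endgame: use \cite[Theorem~4.3]{KMMP} to get that $\mathcal{A}$ is norming, then show that every $a^*\in\mathcal{A}$ satisfies $|a^*|=\ein$ a.e. The gap is in the middle, and it is substantial.

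The step you label as the ``direct translation'' of Lemma~\ref{lemma-coordinates} --- proving $|a^*|\in\{0,1\}$ a.e.\ --- does not translate. In the atomic setting that lemma is proved by testing against extreme points of $B_{X^{**}}$ that pick out a single coordinate; on $[0,1]$ there are no such canonical extreme points, and you give no substitute. Rearrangements and Lemma~\ref{lemma-isometries} only tell you that permuted copies of $a^*$ still lie in $\mathcal{A}$; without any grip on how extreme points of $B_{X^{**}}$ act, that does not force the range of $|a^*|$ into $\{0,1\}$. Similarly, your ``normalise so that $a^*=\ein_S$'' misreads Lemma~\ref{lemma-positive-functional}: that lemma makes a \emph{functional} $x^{**}$ positive on a subspace $X_f$, it does not turn $a^*$ into an indicator. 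Finally, your ``swap a piece of $S$ for a piece of its complement'' argument for $\mu(S)=1$ again presupposes knowing enough about an extreme $x^{**}$ to compare its values on disjoint sets --- precisely the information you have not obtained.

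The paper fills this gap with a genuinely new ingredient: a Claim that for every extreme $x^{**}\in B_{X^{**}}$ and every $\delta>0$ there is an interval $I$ with $\mu(I)\le\delta$ such that $x^{**}(g\ein_{[0,1]\setminus I})=0$ for all $g\in L_\infty[0,1]$. This localisation result is the non-atomic replacement for ``$x^{**}$ picks out one coordinate.'' It is proved by partitioning $[0,1]$ into small intervals, rearranging $f$ so that two chosen intervals sit under the set $\{f\ge\alpha\}$, using Lemma~\ref{lemma-positive-functional} to make $\omega x^{**}$ positive on $X_{\tilde f}$, and then playing the sign-flip $\tilde f_\theta=\tilde f\ein_{I_j}+\theta\tilde f\ein_{[0,1]\setminus I_j}\in\mathcal{A}$ to force $\{|x^{**}(\tilde f\ein_{I_j})|,|x^{**}(\tilde f\ein_{[0,1]\setminus I_j})|\}=\{0,1\}$. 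With the Claim in hand the paper does \emph{not} pass through ``$|a^*|\in\{0,1\}$''; instead it first proves (by a fairly intricate level-set splitting $B_n=\{5+n\le f<6+n\}$, $B_n=B_{n,1}\cup B_{n,2}$, comparing $f_j$ and a disjoint rearrangement $\tilde f_k$) that $a^*$ is \emph{bounded}, and only then uses the Claim once more, together with two rearrangements taking small and large values on the localising interval~$I$, to show $a^*$ is constant. Both of these steps are missing from your plan, and the first one (boundedness) has no counterpart at all in the sequence-space argument you are trying to mimic.
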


\begin{proof}
Since $X$ is a separable lush space, Theorem~4.3 in \cite{KMMP}
tells us that the set
$$
\mathcal{A}=\{f\in S_{X^*} \dopu  |x^{**}(f)|=1 \text{ for every }
x^{**}\in\ext[X^{**}]\}
$$
is norming for $X$. To finish the proof it suffices to show
that $|f|=\ein$ for every $f\in \mathcal{A}$ (recall that
$X^*=X'$ by separability). Indeed, given $x\in X$ we can then write
$$
\|x\|_1\leq \|x\|\leq
\sup\left\{\Bigl|\int_0^1f(t)x(t)\,dt \Bigr| \dopu   f\in
\mathcal{A}\right\} \leq  \|x\|_1,
$$
and taking into account that all simple functions are in $X$ we  obtain
that $X=L_1[0,1]$.

For fixed $f\in \mathcal{A}$, there is an onto isometry on $X^*$ sending $f$ to $|f|$.
Then Lemma~\ref{lemma-isometries} tells us that $|f|\in \mathcal{A}$ and so we can
assume without
loss of generality that $f\geq0$. Since $f\in X\subset L_1[0,1]$, there exist
positive numbers $\alpha$ and $\Delta$ such that
$$
\mu\left(\{t\in [0,1] \dopu  \alpha\leq f(t)\}\right)\geq\Delta.
$$

\noindent\textbf{Claim:} For every $x^{**}\in \ext[X^{**}]$ and
every $\delta>0$ there is an interval $I\subset[0,1]$ with
$\mu(I)\leq \delta$ such that
$x^{**}(g\boldsymbol{1}_{[0,1]\setminus I})=0$ for every $g\in
L_\infty[0,1]$.

\noindent
\emph{Proof of the Claim.} We may suppose that $0<\delta<\Delta/2$.
Now, consider a partition of $[0,1]=I_1\cup\cdots \cup I_n$ into
disjoint intervals with
$\mu(I_k)\leq \delta$ for $k=1,\dots,n$ and find for
 fixed $j,k\in\{1,\dots,n\}$ a rearrangement $\tilde f$ of $f$ such that
\begin{equation}\label{eq:thm-functions-ftilde}
\alpha\boldsymbol{1}_{I_j}\leq\tilde f\boldsymbol{1}_{I_j}\qquad \text{and}\qquad
\alpha\boldsymbol{1}_{I_k}\leq\tilde f\boldsymbol{1}_{I_k}.
\end{equation}
Take $\omega\in\T$ such that $\omega x^{**}(\tilde f)=1$ and use
Lemma~\ref{lemma-positive-functional} to obtain that the functional
$\omega x^{**}$ is positive on the subspace $X_{\tilde f}=\{g\in
X\dopu  |g|\leq c\tilde f$ for some $c>0\}$.
Now observe that
$$
\tilde f_\theta=\tilde f\boldsymbol{1}_{I_j}+\theta\tilde
f\boldsymbol{1}_{[0,1]\setminus I_j}\in \mathcal{A}
$$
for every $\theta \in \T$. Therefore, there are $\theta_1,\theta_2 \in
\T$ satisfying
\begin{align*}
1&=|x^{**}(\tilde f_{\theta_1} )|=|x^{**}(\tilde
f\boldsymbol{1}_{I_j})|+|x^{**}(\tilde
f\boldsymbol{1}_{[0,1]\setminus I_j})|\qquad \text{and}\\
1&=|x^{**}(\tilde f_{\theta_2} )|=\Big||x^{**}(\tilde
f\boldsymbol{1}_{I_j})|-|x^{**}(\tilde
f\boldsymbol{1}_{[0,1]\setminus I_j})|\Big|
\end{align*}
which clearly implies $\big\{|x^{**}(\tilde
f\boldsymbol{1}_{I_j})|, |x^{**}(\tilde
f\boldsymbol{1}_{[0,1]\setminus I_j})|\big\}=\{0,1\}$. Suppose first that
$|x^{**}(\tilde f\boldsymbol{1}_{I_j})|=0$ and use
\eqref{eq:thm-functions-ftilde}
to observe that $g\boldsymbol{1}_{I_j}\in X_{\tilde f}$ for every
$g\in L_\infty[0,1]$.
 Thus we can write
$$
|\omega x^{**}(g\boldsymbol{1}_{I_j})|\leq
\omega x^{**}(|g|\boldsymbol{1}_{I_j})
\leq \omega x^{**}(\tilde f\boldsymbol{1}_{I_j})=0
$$
and, therefore, $x^{**}(g\boldsymbol{1}_{I_j})=0$. If
otherwise $|x^{**}(\tilde f\boldsymbol{1}_{[0,1]\setminus I_j})|=0$, then
$$
|\omega x^{**}(g\boldsymbol{1}_{I_k})|\leq
\omega x^{**}(|g|\boldsymbol{1}_{I_k})
\leq \omega x^{**}(\tilde f\boldsymbol{1}_{[0,1]\setminus I_j})=0.
$$
Hence we have shown that either $x^{**}(g\boldsymbol{1}_{I_j})=0$ for every
$g\in L_\infty[0,1]$ or $x^{**}(g\boldsymbol{1}_{I_k})=0$ for
every $g\in L_\infty[0,1]$.
Finally, the arbitrariness of $j,k\in \{1,\dots,n\}$ finishes
the proof of the Claim.

We continue the proof showing that
 $f$ is necessarily bounded (see (a) below) and even constant (see (b) below).

 (a) Suppose for contradiction that $f$ is unbounded and define
$$
B=\{t\in [0,1]\dopu f(t)\geq 6\}
$$
which has positive measure and satisfies $\mu(B)\leq1/6$. Fix
$x^{**}\in\ext[X^{**}]$ and use the Claim to find an interval
$I\subset[0,1]$ with $\mu(I)\leq \mu(B)$ and such that
$x^{**}(g\boldsymbol{1}_{[0,1]\setminus I})=0$ for every $g\in
L_\infty[0,1]$. Up to rearrangement of $f$, we can assume without loss
of generality that $I\subset B$. For every $n\in \N$ consider the set
$$
B_n=\{t\in [0,1]\dopu 5+n\leq f(t)<6+n\},
$$
split it into two sets of equal measure $B_n=B_{n,1}\cup B_{n,2}$, and define
$$
B_1=\bigcup_{n=1}^\infty B_{n,1}\,, \qquad  B_2=
\bigcup_{n=1}^\infty B_{n,2}\,,\qquad f_1=f\boldsymbol{1}_{B_1}\,,
\qquad \text{and} \qquad f_2=f\boldsymbol{1}_{B_2}\,.
$$
For $\{i,j\}=\{1,2\}$, take an automorphism $\tau$ of $[0,1]$
which fixes $[0,1]\setminus B$ and sends $B_{n,i}$ to $B_{n,j}$
for every $n\in\N$. Calling $\hat f_i=f_i \tau^{-1}$, observe
that
\begin{equation}\label{eq:thm-functions-prop-fi}
f_j\leq \frac76\,\hat f_i.
\end{equation}
Besides, we can write
$$
1=|x^{**}(f)|=|x^{**}(f_1)+x^{**}(f_2)+x^{**}(f\boldsymbol{1}_{[0,1]\setminus B})|
=|x^{**}(f_1)+x^{**}(f_2)|
$$
and using an argument as in the proof of the Claim one can easily
deduce the existence of $j\in \{1,2\}$ such that
$|x^{**}(f_j)|=1$. Since
$\mu(\{t\in[0,1]\dopu \tilde f(t)\leq2\}) \geq \frac12$ we can now take
 a rearrangement $\tilde f$ of $f$ such that
$$
\tilde B\cap B=\emptyset \qquad\text{and} \qquad
I\subset\{t\in[0,1]\dopu \tilde f(t)\leq2\}
$$
where $\tilde B=\{t\in [0,1] \dopu \tilde f(t)\geq 6\}$. Using this
and $I\subset B$,
observe that $|\tilde f\boldsymbol{1}_I|\leq \frac26 |f\boldsymbol{1}_I|$ and,
therefore, $\|\tilde f\boldsymbol{1}_I\|\leq\frac13$. Let $\tilde B_i$ and
$\tilde f_i$ be the corresponding rearrangements of $B_i$ and $f_i$ associated
to $\tilde f$. Next use that $\tilde f
\boldsymbol{1}_{[0,1]\setminus\tilde B}\in L_\infty[0,1]$
to deduce that $x^{**}(\tilde f\boldsymbol{1}_{[0,1]\setminus (\tilde B \cup I)})=0$.
Hence, one can write
\begin{align*}
1=|x^{**}(\tilde f)|&=|x^{**}(\tilde f_1)+x^{**}(\tilde f_2)
+x^{**}(\tilde f\boldsymbol{1}_I)+x^{**}(\tilde
f\boldsymbol{1}_{[0,1]\setminus (\tilde B \cup I)})|\\
&\leq|x^{**}(\tilde f_1)|+|x^{**}(\tilde f_2)|+\frac13\,.
\end{align*}
Therefore, there is $k\in \{1,2\}$ such that $|x^{**}(\tilde f_k)|\geq\frac13$.

Finally, for each $\theta\in \T$ define $g_\theta=f_j+\theta\tilde
f_k$ and take $\theta_0\in\T$ such that
$$
\frac43\leq|x^{**}(f_j)|+|x^{**}(\tilde
f_k)|=|x^{**}(g_{\theta_0})|\leq\|g_{\theta_0}\|.
$$
If $j\neq k$ then one obviously has $\|g_{\theta_0}\|\leq\|f\|=1$,
which is a contradiction.
If otherwise $j=k$, use \eqref{eq:thm-functions-prop-fi} to deduce that
$$
|g_{\theta_0}|\leq f_j+\tilde f_k \leq\frac76(\hat f_i+\tilde f_k)
$$
where $i\in\{1,2\}\setminus\{j\}$. Therefore,
$$
\|g_{\theta_0}\|\leq\frac76 \|\hat f_i+\tilde f_k\|\leq\frac76 \|f\|<\frac43,
$$
which again gives us a contradiction. Hence, we have that
$f\in L_\infty[0,1]$.

(b) Suppose for contradiction that $f$ is non-constant.
Then, there are numbers $0\leq c<d$ and sets
$$
C=\{t\in [0,1]  \dopu   f(t)\leq c \}
\qquad \text{and} \qquad D=\{t\in [0,1]  \dopu   f(t)\geq d \}
$$
such that $\mu(C)>0$ and $\mu(D)>0$. Now fix $x^{**}\in \ext[X^{**}]$ and use the
Claim to find an interval
$I\subset[0,1]$ with $\mu(I)\leq\min \{\mu(C), \mu (D)\}$
and such that $x^{**}(g\boldsymbol{1}_{[0,1]\setminus I})=0$ for
every $g\in L_\infty [0,1]$.
Take rearrangements $f_1,f_2\in \mathcal{A}$  of $f$ such that
$$
f_1(t)\leq c \qquad\text{and} \qquad f_2(t)\geq d \qquad \text{for every }
 t\in I;
$$
hence $\|f_1\boldsymbol{1}_{I}\| \leq\frac{c}{d}\|f_2\boldsymbol{1}_{I}\|$.
Then, using the fact that $f_1\in L_\infty[0,1]$, we can write
$$
1=|x^{**}(f_1)|=
|x^{**}(f_1\boldsymbol{1}_{I})+x^{**}(f_1\boldsymbol{1}_{[0,1]\setminus I})|=
|x^{**}(f_1\boldsymbol{1}_{I})|
$$
and, therefore,
$$
1\leq\|f_1\boldsymbol{1}_{I}\|\leq
\frac{c}{d}\|f_2\boldsymbol{1}_{I}\|\leq\frac{c}{d}<1,
$$
which is the desired contradiction.

Hence, $f\in L_\infty[0,1]$ and it is constant. Now it is immediate to
deduce that $f=\ein$ finishing the proof.
\end{proof}


\subsection{The Daugavet property in separable r.i.\ function spaces}

In their
paper \cite{AcKaMa}, M.~D.~Acosta, A.~Kami{\'n}ska, and
M.~Masty{\l}o proved in Proposition~1.6 that if a separable real r.i.\
function space on $[0,1]$ with the Fatou property
has the \DP, then $X$ as a set of
functions coincides with $L_1[0,1]$, but they left open the
question whether the norm on $X$ is necessarily the same as the
standard $L_1$-norm.

In this section we answer the above question in the positive even if
we remove the assumption of the Fatou property;
i.e., we show that the only separable real r.i.\  function space on
$[0,1]$ with the \DP\ is $L_1[0,1]$ endowed with its canonical
norm.

Below $X$ is a separable (hence order continuous) real r.i.\ function
space on $[0,1]$. We remark that order
continuity implies that the subspace of simple functions and the
subspace of continuous functions are dense in $X$. Denote by
$\phi$ the \textit{fundamental function} of $X$, that is $\phi(t)
= \|\ein_{[0,t]}\|_X$. Let us list here some known properties
of~$\phi$:
\begin{enumerate}
\item[(a)] $\phi$ is non-decreasing,
\item[(b)] $t \leq \phi(t)\leq 1$,
\item[(c)] $\phi(t + \tau) \leq \phi(t) + \phi(\tau)$,
\item[(d)] $\lim_{t \to 0}\phi(t)=0$ (see \cite[Chapter 2,
    Theorem~5.5]{BenSha}, for instance).
\end{enumerate}

We need several preliminary results.
The first one is certainly
known, but we haven't been able to locate a reference. It
characterises
$L_1[0,1]$ among separable r.i.\ function spaces on $[0,1]$.

\begin{lemma}\label{lemma-L1-char}
Let $X$ be a separable r.i.\ function space on $[0,1]$
and let $\phi$ be its
fundamental function. If $\liminf_{\tau \to 0} \phi(\tau)/\tau =
1$, then $X = L_1[0,1]$ endowed with its canonical norm.
\end{lemma}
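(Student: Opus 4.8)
The plan is to prove the lemma in two stages: first upgrade the one-sided asymptotic hypothesis to the global identity $\phi(t)=t$ on all of $[0,1]$, and then show that this identity forces $\|\cdot\|_X=\|\cdot\|_1$, initially on simple functions and then everywhere by density. For the first stage I would exploit the subadditivity (c) together with the normalisation $\phi(t)\ge t$ from (b). Since $\phi(\tau)/\tau\ge1$ always, the hypothesis provides a sequence $\tau_n\to0$ with $\phi(\tau_n)/\tau_n\to1$. Fixing $t\in(0,1]$ and writing $t=k_n\tau_n+r_n$ with $k_n=\lfloor t/\tau_n\rfloor$ and $0\le r_n<\tau_n$, subadditivity gives
\[
\phi(t)\le\phi(k_n\tau_n)+\phi(r_n)\le k_n\phi(\tau_n)+\phi(r_n)
=(k_n\tau_n)\,\frac{\phi(\tau_n)}{\tau_n}+\phi(r_n).
\]
Here $k_n\tau_n\to t$ and $\phi(\tau_n)/\tau_n\to1$, while $0\le\phi(r_n)\le\phi(\tau_n)=\tau_n\cdot(\phi(\tau_n)/\tau_n)\to0$; hence the right-hand side tends to $t$, and combined with $\phi(t)\ge t$ this yields $\phi(t)=t$.

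For the second stage, fix a nonnegative simple function $g$ with distinct positive values $v_1>\cdots>v_k$, and decompose it along its level sets $D_j=\{g\ge v_j\}$ as $g=\sum_{j=1}^k(v_j-v_{j+1})\ein_{D_j}$ (with $v_{k+1}=0$). Rearrangement invariance gives $\|\ein_{D_j}\|_X=\phi(\mu(D_j))=\mu(D_j)$, so the triangle inequality together with Abel summation produces
\[
\|g\|_X\le\sum_{j=1}^k(v_j-v_{j+1})\,\mu(D_j)=\int_0^1 g=\|g\|_1.
\]
The reverse inequality $\|g\|_1\le\|g\|_X$ is built into the definition of an r.i.\ space, so the two norms agree on nonnegative simple functions, and by solidity ($\|g\|_X=\||g|\|_X$) on all simple functions.

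Finally I would pass to an arbitrary $x\in X$. By order continuity the simple functions are dense in $X$, so choose simple $g_n\to x$ in $\|\cdot\|_X$. Then $(g_n)$ is $\|\cdot\|_X$-Cauchy, hence (the norms agreeing on the simple functions $g_n-g_m$) also $\|\cdot\|_1$-Cauchy; and since $\|\cdot\|_1\le\|\cdot\|_X$ forces $g_n\to x$ in $L_1$ as well, we obtain $\|x\|_X=\lim_n\|g_n\|_X=\lim_n\|g_n\|_1=\|x\|_1$. Thus $X$ embeds isometrically into $L_1[0,1]$ as a complete, hence closed, subspace that contains all simple functions; as these are dense in $L_1[0,1]$, we conclude $X=L_1[0,1]$ with its canonical norm.

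I expect the only genuinely delicate point to be the first stage: the supplied information is one-sided and merely subsequential, and the crux is to see that subadditivity propagates it from a vanishing sequence of scales to every $t$. It is worth emphasising that $\phi(t)=t$ does not by itself single out an r.i.\ space in general; what makes it pin down the norm here is precisely that the elementary triangle-inequality estimate of the second stage reproduces the $L_1$-norm exactly when $\phi$ is the identity. The concluding density-and-completeness argument is then routine.
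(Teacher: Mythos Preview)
Your proof is correct and follows essentially the same route as the paper. The paper's argument for $\phi(t)=t$ uses the ceiling $m(n)=\min\{m:m\tau_n\ge t\}$ rather than your floor, and for simple functions it applies the triangle inequality directly to a disjoint representation $f=\sum a_k\ein_{A_k}$ (so $\|f\|_X\le\sum|a_k|\phi(\mu(A_k))=\sum|a_k|\mu(A_k)=\|f\|_1$) instead of your layer-cake/Abel decomposition; the density passage is left implicit there. These are cosmetic differences only.
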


\begin{proof}
It is sufficient to prove that $\phi(t) = t$ for all $t \in [0,1)$.
Indeed, in this case for every simple function $f =
\sum_{k=1}^na_k\ein_{A_k}$ we have that $\|f\|_{L_1} \leq \|f\|_{X} \leq
\sum_{k=1}^n |a_k| \phi(\mu(A_k)) = \|f\|_{L_1}$. So fix $t \in [0,1)$
and select a sequence of $\tau_n
> 0$, $\tau_n \to 0$, such that $\phi(\tau_n)/\tau_n \to 1$.
Denote $m(n)$ the smallest positive integer such that
$m(n)\tau_n \geq t$ and observe that $t\leq m(n)\tau_n < t +
\tau_n$. Then
$$
t \leq \phi(t) \leq \phi(m(n)\tau_n)  \leq m(n) \phi(\tau_n) =
 \tau_n m(n) \phi(\tau_n)/\tau_n \to t
$$
as ${n\to \infty}$.
\end{proof}

\begin{lemma}\label{lemma-integral}
Let $\Delta = [0,a] \subset [0,1]$ be a subinterval. Define for
every $\tau \in \Delta$ the $\Delta$-circling shift operator
$T_\tau\dopu  (T_\tau f)(t) = f(t)$ for $t > a$, $(T_\tau f)(t)
= f(t+\tau)$ for $0 \leq t \leq a - \tau$, and $(T_\tau f)(t) =
f(t - a + \tau)$ for $ a -\tau < t \leq a$. Then for every $f
\in X$ the map $\tau \mapsto T_\tau f$ is continuous in the
norm topology of $X$ and hence is Riemann integrable. Moreover,
$$
\frac{1}{a}\int_0^a T_\tau f \,d\tau =
\left(\frac{1}{a}\int_0^a f(t) \,dt\right)\ein_\Delta
+ f \ein_{[0,1] \setminus \Delta}.
$$
\end{lemma}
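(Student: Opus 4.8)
The plan is to prove the two assertions in turn: first the strong continuity of $\tau \mapsto T_\tau f$, from which Riemann integrability is automatic, and then the integration formula by reducing the $X$-valued integral to an elementary scalar computation.

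For the continuity, I would first observe that each $T_\tau$ is induced by a measure-preserving bijection of $[0,1]$ (the circling shift on $\Delta$, the identity elsewhere), so by rearrangement invariance $T_\tau$ is an onto isometry of $X$; moreover $\{T_\tau\}_{\tau}$ is the one-parameter group $T_\tau T_\sigma = T_{\tau+\sigma}$ indexed by the circle $\mathbb{R}/a\mathbb{Z}$, with $T_0 = T_a = \Id$. Since the $T_\tau$ are isometries, $\|T_\tau f - T_{\tau'}f\| = \|T_{\tau-\tau'}f - f\|$ (indices taken mod $a$), so everything reduces to showing $\|T_s f - f\|_X \to 0$ as $s \to 0$. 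I would check this first for continuous $f$ by splitting $T_s f - f$ into its ``bulk'' part on $[0, a-s]$, where $|(T_s f - f)(t)| = |f(t+s) - f(t)|$ is dominated by the modulus of continuity $\omega_f(s)$ (so its $X$-norm is at most $\omega_f(s)\|\ein_\Delta\|_X \le \omega_f(s)$), and its ``wrap-around'' part on $(a-s,a]$, which is bounded by $2\|f\|_\infty\,\ein_{(a-s,a]}$ and hence has $X$-norm at most $2\|f\|_\infty\,\phi(s)$. Property (d) of the fundamental function, $\phi(s)\to 0$, then forces $\|T_s f - f\|_X \to 0$. Finally I would pass to arbitrary $f \in X$ using the density of the continuous functions (guaranteed by order continuity) together with the uniform bound $\|T_s\| = 1$. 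A map that is norm-continuous on the compact interval $[0,a]$ into the Banach space $X$ is Riemann integrable, which settles the first claim.

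For the formula I would exploit that the norm-one inclusion $X \hookrightarrow L_1[0,1]$ is a bounded operator and hence commutes with the Riemann integral; thus the element $\int_0^a T_\tau f\,d\tau \in X$, viewed in $L_1[0,1]$, is the $L_1$-valued integral of the same integrand. In $L_1$ the integral may be computed pointwise almost everywhere by Fubini, using the joint measurability of $(\tau,t)\mapsto (T_\tau f)(t)$, so it suffices to identify $F(t):=\int_0^a (T_\tau f)(t)\,d\tau$ for a.e.\ $t$. For $t > a$ one has $(T_\tau f)(t) = f(t)$ for all $\tau$, whence $F(t) = a\,f(t)$; for $0 \le t \le a$ I would split the $\tau$-integral at $\tau = a - t$, where on $[0,a-t]$ the integrand is $f(t+\tau)$ and on $(a-t,a]$ it is $f(t-a+\tau)$, and the substitutions $u = t+\tau$ and $v = t - a + \tau$ turn these into $\int_t^a f$ and $\int_0^t f$ respectively, so that $F(t) = \int_0^a f$. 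Dividing by $a$ yields exactly $\bigl(\frac1a\int_0^a f\bigr)\ein_\Delta + f\,\ein_{[0,1]\setminus\Delta}$.

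The only genuinely delicate point is the strong continuity near $\tau = 0$, and within it the control of the wrap-around term; this is precisely where order continuity (equivalently $\phi(s)\to 0$) is indispensable, since on a non-order-continuous r.i.\ space such as $L_\infty$ the circling shift fails to be strongly continuous. The remaining steps, including the justification that the abstract $X$-valued integral agrees with the pointwise integral, are routine once the continuity and the ``inclusion commutes with integration'' observation are in place.
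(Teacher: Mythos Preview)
Your argument is correct and follows essentially the same density strategy as the paper: verify the claims on a dense class of nice functions and extend using the uniform isometry bound $\|T_\tau\|=1$. The paper's proof is terser because it restricts to continuous $f$ with $f(0)=f(a)$, for which $\tau\mapsto T_\tau f$ is already continuous in the sup-norm (hence in $\|\cdot\|_X$, since $\|\cdot\|_X\le\|\cdot\|_\infty$), so your explicit wrap-around estimate via $\phi(s)\to 0$ is not needed, and the integral formula is declared ``evident'' for such $f$.
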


\begin{proof}
The fact is evident when $f$ is continuous and fulfills $f(0) =
f(a)$. Since, as remarked above,
such functions form a dense subset of $X$, we are
done.
\end{proof}

\begin{corollary}\label{cor-integral}
Let $[0,1]$ be split into a disjoint union of measurable
subsets $\Delta_1$ and $\Delta_2$. Then for every $g \in X$
$$
\left\||g|\ein_{\Delta_1} +
 \left(\frac{1}{\mu(\Delta_2)}\int_{\Delta_2} g(t) \,dt\right)\ein_{\Delta_2}\right\|_X
\leq \|g\|_X.
$$
\end{corollary}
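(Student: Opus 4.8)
The plan is to reduce to the situation treated in Lemma~\ref{lemma-integral}, where the averaging set is an initial interval, and then to feed its formula into a convexity estimate. I may assume $\mu(\Delta_2)>0$ (otherwise $\ein_{\Delta_2}=0$ a.e.\ and the inequality reads $\bigl\||g|\ein_{\Delta_1}\bigr\|_X=\|g\|_X$, which holds by solidity), and I set $a=\mu(\Delta_2)$. Using rearrangement invariance, I would choose a measure-preserving automorphism $\sigma$ of $[0,1]$ carrying $\Delta_2$ onto $[0,a]$, and hence $\Delta_1$ onto $(a,1]$, up to a null set; such a $\sigma$ exists because $([0,1],\mu)$ is non-atomic. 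Putting $h=g\circ\sigma^{-1}$ we have $\|h\|_X=\|g\|_X$, and since $\sigma$ is measure-preserving the change of variables gives $\frac1a\int_0^a h(t)\,dt=\frac1{\mu(\Delta_2)}\int_{\Delta_2}g(t)\,dt=:c$. Transporting the function inside the norm by $\sigma^{-1}$ and invoking the r.i.\ property, the left-hand side of the Corollary equals $\bigl\||h|\ein_{(a,1]}+c\,\ein_{[0,a]}\bigr\|_X$, so it suffices to bound this quantity by $\|h\|_X$.

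Next I would apply Lemma~\ref{lemma-integral} with $\Delta=[0,a]$ and $f=h$, which yields
$$
\frac1a\int_0^a T_\tau h\,d\tau = c\,\ein_{[0,a]}+h\,\ein_{(a,1]}.
$$
Each $T_\tau$ is induced by a measure-preserving bijection of $[0,1]$ (a rotation of $[0,a]$ extended by the identity on the complement), so $\|T_\tau h\|_X=\|h\|_X$ by rearrangement invariance. Since $\tau\mapsto T_\tau h$ is norm-continuous and Riemann integrable by the Lemma, the standard estimate $\|\int\,\cdot\,\|\leq\int\|\,\cdot\,\|$ for vector-valued integrals gives
$$
\Bigl\|\frac1a\int_0^a T_\tau h\,d\tau\Bigr\|_X \leq \frac1a\int_0^a \|T_\tau h\|_X\,d\tau = \|h\|_X,
$$
whence $\bigl\|c\,\ein_{[0,a]}+h\,\ein_{(a,1]}\bigr\|_X\leq\|h\|_X$.

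Finally I would pass from $h$ to $|h|$ using solidity. Pointwise a.e.\ one has the identity of moduli $\bigl|c\,\ein_{[0,a]}+|h|\ein_{(a,1]}\bigr| = \bigl|c\,\ein_{[0,a]}+h\,\ein_{(a,1]}\bigr|$: on $(a,1]$ the two integrands are $|h|$ and $h$, which agree in modulus, while on $[0,a]$ both equal the constant $c$. Two functions with the same modulus have the same $X$-norm, so combining with the previous step yields $\bigl\||h|\ein_{(a,1]}+c\,\ein_{[0,a]}\bigr\|_X\leq\|h\|_X=\|g\|_X$, which is exactly the asserted inequality after undoing $\sigma$.

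I expect the only delicate point to be the reduction step: one must justify the existence of the measure-preserving automorphism sending the arbitrary measurable set $\Delta_2$ onto $[0,a]$ (valid modulo null sets precisely because the measure is non-atomic), and verify that conjugation by $\sigma^{-1}$ transforms $|g|\ein_{\Delta_1}$ into $|h|\ein_{(a,1]}$ and $\ein_{\Delta_2}$ into $\ein_{[0,a]}$ while preserving the average $c$. Everything after that is the convexity bound drawn from Lemma~\ref{lemma-integral} together with the solidity observation on moduli, both of which are routine.
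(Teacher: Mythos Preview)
Your proof is correct and follows exactly the route the paper takes: reduce to $\Delta_2=[0,a]$ by rearrangement invariance, then apply Lemma~\ref{lemma-integral} together with the triangle inequality for the vector-valued integral and the solidity of $X$. The paper compresses all of this into the single sentence ``We can assume without loss of generality $\Delta_2=[0,a]$ and apply the previous lemma,'' so you have simply unpacked the steps the authors leave implicit.
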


\begin{proof}
We can assume without loss of generality  $\Delta_2 = [0, a]$ and apply
the previous lemma.
\end{proof}

\begin{corollary}\label{cor-integral+}
Let $g \in X$. Then for every $t \geq \mu(\supp g)$
$$
\frac{1}{t}\phi(t) \|g\|_1 \leq \|g\|_X
$$
\end{corollary}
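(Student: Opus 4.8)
The plan is to derive the inequality directly from Corollary~\ref{cor-integral} by choosing the splitting of $[0,1]$ so that $g$ lives entirely in $\Delta_2$ and its mass gets averaged into a constant. First I would make two harmless reductions. Since $g=0$ a.e.\ makes both sides vanish, I may assume $g\neq 0$, and since $\phi$ is only defined on $[0,1]$ the hypothesis should be read as $\mu(\supp g)\leq t\leq 1$. By solidity $\bigl\||g|\bigr\|_X=\|g\|_X$, while $\bigl\||g|\bigr\|_1=\|g\|_1$ and $\supp|g|=\supp g$; hence replacing $g$ by $|g|$ leaves the desired inequality unchanged, and I may assume $g\geq 0$.

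Because $\mu(\supp g)\leq t\leq 1$, I would next pick a measurable set $\Delta_2\subseteq[0,1]$ with $\supp g\subseteq\Delta_2$ and $\mu(\Delta_2)=t$, obtained by enlarging $\supp g$ by a set of measure $t-\mu(\supp g)$, and put $\Delta_1=[0,1]\setminus\Delta_2$. Then $g\,\ein_{\Delta_1}=0$ since $g$ vanishes off $\Delta_2$, and $\int_{\Delta_2} g\,d\mu=\|g\|_1$ because $g\geq 0$ is supported inside $\Delta_2$. Applying Corollary~\ref{cor-integral} to this splitting therefore yields
\[
\left\|\frac{\|g\|_1}{t}\,\ein_{\Delta_2}\right\|_X\leq\|g\|_X .
\]
The last ingredient is that rearrangement invariance forces $\|\ein_{\Delta_2}\|_X=\|\ein_{[0,t]}\|_X=\phi(t)$, as $\Delta_2$ and $[0,t]$ share the measure $t$ and are carried onto one another by a measure-preserving bijection. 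Pulling the nonnegative scalar $\|g\|_1/t$ out of the norm then gives $\tfrac1t\phi(t)\|g\|_1\leq\|g\|_X$, which is the claim.

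I do not anticipate a genuine obstacle, since the statement is a short consequence of the preceding corollary; the only two points that deserve a word are the reduction to $g\geq 0$ and the identification $\|\ein_{\Delta_2}\|_X=\phi(t)$, which come respectively from solidity and from rearrangement invariance. The essential work is already carried by Corollary~\ref{cor-integral}, whose role here is precisely to average the mass of $g$ on $\Delta_2$ into the constant $\|g\|_1/t$ without increasing the $X$-norm.
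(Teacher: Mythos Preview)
Your argument is correct and is essentially the same as the paper's: the paper just writes ``assume $\Delta_2:=[0,t]\supset \supp g$ and apply the previous corollary,'' and you have spelled out the details, including the reduction to $g\geq 0$ and the identification $\|\ein_{\Delta_2}\|_X=\phi(t)$. The only cosmetic difference is that the paper moves $\supp g$ into $[0,t]$ by rearrangement invariance at the start, whereas you take a general $\Delta_2$ of measure $t$ and invoke rearrangement invariance at the end; these are equivalent.
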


\begin{proof}
We may assume without loss of generality that
$\Delta_2 := [0, t] \supset \supp g$ and apply
the previous corollary.
\end{proof}

\begin{lemma}\label{lemma-L1-converg}
Let $g \in L_\infty[0,1]$. Then for every $\alpha > 0$
$$
\|g\|_X \leq \alpha +  \|g\|_\infty \phi(\alpha^{-1} \|g\|_1).
$$
In particular, if $f_n \in L_\infty[0,1]$, $\sup_n \|f_n\|_\infty < \infty$
and $ \lim_{n \to \infty}\|f_n\|_1 = 0$, then $ \lim_{n \to \infty}\|f_n\|_X = 0$.
\end{lemma}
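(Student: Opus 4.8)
The plan is to prove the displayed inequality first and then read off the convergence statement from it by letting $\alpha\to0$ and invoking property~(d) of $\phi$. The whole of the inequality will come from splitting $g$ into its ``large'' and ``small'' parts at the level $\alpha$ and feeding each part into the three elementary tools available to us: solidity, the norm-one inclusion $L_\infty[0,1]\hookrightarrow X$, and the identity $\|\ein_A\|_X=\phi(\mu(A))$, which holds because $\ein_A$ is equimeasurable with $\ein_{[0,\mu(A)]}$ and $X$ is rearrangement invariant.

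First I would fix $\alpha>0$ and set
\[
A=\{t\in[0,1]\dopu |g(t)|>\alpha\}.
\]
On the complement $A^c$ one has $|g\ein_{A^c}|\leq\alpha$, so the inclusion $L_\infty[0,1]\hookrightarrow X$ gives $\|g\ein_{A^c}\|_X\leq\|g\ein_{A^c}\|_\infty\leq\alpha$. On $A$ I would use solidity: since $|g\ein_A|\leq\|g\|_\infty\,\ein_A$, we get $\|g\ein_A\|_X\leq\|g\|_\infty\,\|\ein_A\|_X=\|g\|_\infty\,\phi(\mu(A))$. The remaining task is to bound $\mu(A)$, and here Chebyshev's inequality does everything: $\alpha\,\mu(A)\leq\int_A|g|\leq\|g\|_1$, so $\mu(A)\leq\alpha^{-1}\|g\|_1$. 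Combining the two pieces by the triangle inequality and using that $\phi$ is non-decreasing (property~(a)) yields
\[
\|g\|_X\leq\|g\ein_A\|_X+\|g\ein_{A^c}\|_X\leq \|g\|_\infty\,\phi\bigl(\alpha^{-1}\|g\|_1\bigr)+\alpha,
\]
which is precisely the claimed estimate.

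For the final assertion I would put $C=\sup_n\|f_n\|_\infty<\infty$ and fix $\alpha>0$. Applying the inequality to each $f_n$ gives $\|f_n\|_X\leq C\,\phi(\alpha^{-1}\|f_n\|_1)+\alpha$ for all $n$ large enough that $\alpha^{-1}\|f_n\|_1\leq1$. Since $\|f_n\|_1\to0$, property~(d) forces $\phi(\alpha^{-1}\|f_n\|_1)\to0$, whence $\limsup_n\|f_n\|_X\leq\alpha$; as $\alpha>0$ was arbitrary, $\lim_n\|f_n\|_X=0$.

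I do not expect a genuine obstacle here: once the splitting level is chosen as $\alpha$, everything reduces to Chebyshev's inequality together with solidity, the norm-one embedding $L_\infty[0,1]\hookrightarrow X$, and the rearrangement-invariance identity $\|\ein_A\|_X=\phi(\mu(A))$. The only point needing a moment's care is the degenerate range $\alpha<\|g\|_1$, where $\alpha^{-1}\|g\|_1>1$ and $\phi$ is a priori undefined; there the inequality is trivial because $\|g\|_X\leq\|g\|_\infty$, and it never arises in the application since $\|f_n\|_1$ is eventually smaller than the fixed $\alpha$.
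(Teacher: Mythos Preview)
Your argument is correct and essentially identical to the paper's: both split at the level $\alpha$, bound the large part via solidity and Chebyshev's inequality to get $\|g\|_\infty\,\phi(\mu\{|g|>\alpha\})\leq\|g\|_\infty\,\phi(\alpha^{-1}\|g\|_1)$, and bound the small part by $\alpha$ using the norm-one inclusion $L_\infty[0,1]\hookrightarrow X$. The paper compresses this into the single pointwise estimate $|g|\leq\alpha+\|g\|_\infty\,\ein_{\{|g|>\alpha\}}$, while you first write $g=g\ein_A+g\ein_{A^c}$; the content is the same, and your extra remark covering the case $\alpha^{-1}\|g\|_1>1$ is a nice clarification that the paper leaves implicit.
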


\begin{proof}
Remark that
$$
|g| \leq \alpha + \|g\|_\infty \ein_{\{\tau \in [0,1] \dopu  |g(\tau)| > \alpha\}},
$$
and that $\mu(\{\tau \in [0,1] \dopu  |g(\tau)| > \alpha\}) \leq
\alpha^{-1} \|g\|_1$.
\end{proof}

\begin{theorem}\label{thm-L1-Daug}
Let $X$ be a separable real  r.i.\  space
on $[0,1]$ with the following property: for every $\eps > 0$
there is an $f = f_{\eps} \in X$ such that
\begin{enumerate}
\item[(a)] $\|f\|_X =1$
\item[(b)] $\int_0^1f(t)\,dt < -1 + \eps$
\item[(c)]  $\|f + \ein\|_X \geq 2 - \eps$.
\end{enumerate}
Then $X = L_1[0,1]$ (endowed with its canonical norm).
\end{theorem}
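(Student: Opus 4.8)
The plan is to reduce everything, via Lemma~\ref{lemma-L1-char}, to producing a sequence $\tau_n\downarrow 0$ with $\phi(\tau_n)/\tau_n\to 1$; since $\phi(\tau)\ge\tau$ always holds, only the upper estimate $\phi(\tau_n)/\tau_n\le 1+o(1)$ has to be found. First I would read off the shape of the extremising functions $f=f_\eps$. From (a) and the norm-one inclusion $X\subset L_1[0,1]$ one gets $\|f\|_1\le 1$, while (b) forces $\int f^+<\eps/2$ and $\int f^->1-\eps$; thus $f$ is, up to an $L_1$-negligible positive part, a nonnegative profile of unit $L_1$- and $X$-mass. Since $\ein$ is constant, the distribution of $f$ alone determines $\|\ein+f\|_X$, so by rearrangement invariance I may replace $f$ by its decreasing rearrangement and work with $g:=f^-$ non-increasing, $g\ge 0$, $\|g\|_X\le 1$, $\int_0^1 g>1-\eps$, and $\|\ein-g\|_X\ge 2-\eps-\|f^+\|_X$.

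The heart of the matter is a two-sided squeeze on the mass of $g$. On one side, Corollary~\ref{cor-integral+} applied to $g\,\ein_{[0,t]}$ gives, for every $t$, the ceiling $\int_0^t g\le t/\phi(t)$, i.e.\ $\phi(t)/t\le\bigl(\int_0^t g\bigr)^{-1}$; so it suffices to exhibit a small $t$ carrying almost all the $L_1$-mass. On the other side I would feed in (c): since $X^*=X'$ by separability, pick $\sigma\in S_{X'}$ with $\int\sigma(\ein-g)\ge 2-\eps$, which yields simultaneously $\int\sigma\ge 1-\eps$ and $\int\sigma g\le -1+\eps$. Because $g\ge 0$, the second inequality forces $\sigma$ to be sharply negative exactly where $g$ is large, while $\int\sigma\ge 1-\eps$ together with $\|\sigma\|_{X'}\ge\|\sigma\|_1$ gives $\int\sigma^-\le\eps/2$. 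Hence $\sigma^-$ has $X'$-norm close to $1$ but $L_1$-norm at most $\eps/2$, and is coupled to $g$ by $\int\sigma^-g\ge 1-\eps$. Since $\int\sigma^-g\le\|\sigma^-\|_1\,\|g\|_\infty\le(\eps/2)\|g\|_\infty$, this pins $\|g\|_\infty\ge 2(1-\eps)/\eps\to\infty$, so on the segment $[0,\tau_\eps)$ where $g\ge H_\eps:=2(1-\eps)/\eps$ the bound $H_\eps\phi(\tau_\eps)\le\|g\,\ein_{[0,\tau_\eps)}\|_X\le 1$ gives $\phi(\tau_\eps)\le 1/H_\eps\to 0$ and in particular $\tau_\eps\to 0$.

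To turn this into the rate, I would localise both near-extremisers to the common small scale $\tau_\eps$: on $[0,\tau_\eps)$ the function $g$ (in $X$) and $\sigma^-$ (in $X'$) are both nearly norm-optimal and are paired essentially optimally, and I would invoke the elementary relation $\phi_X(\tau)\phi_{X'}(\tau)=\tau$ between the fundamental functions of $X$ and its associate to convert near-optimality of the Hölder pairing at scale $\tau_\eps$ into $\phi(\tau_\eps)/\tau_\eps\to 1$; Lemma~\ref{lemma-L1-char} then finishes. The $L_1$-small positive part $f^+$ is dispatched by splitting it at a high level: its bounded part is killed in $X$-norm by the bounded-convergence estimate Lemma~\ref{lemma-L1-converg}, and its unbounded tip is a thin spike governed by Corollary~\ref{cor-integral+}.

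The main obstacle is precisely this last conversion, and it is genuinely delicate: the tempting shortcut ``large $X$-norm $\Rightarrow$ large $L_1$-mass'' is nothing but the $L_1$-property one is trying to establish, so analysing $g$ in isolation is circular. The resolution must use that (c) nearly saturates the triangle inequality $\|\ein-g\|_X\le\|\ein\|_X+\|g\|_X$ on both sides of the duality at once — a near-extremiser $g$ in $X$ and a near-extremiser $\sigma^-$ in $X'$ concentrated on the same small set and linked by $\phi_X\phi_{X'}=\tau$ — rather than through $g$ alone. A secondary technical point, to be handled first, is the reduction to $g\ge 0$: since $|\ein+(-f^-)|\le|\ein+f|$ pointwise, truncating $f^+$ can only \emph{decrease} $\|\ein+f\|_X$, so the positive part cannot simply be discarded and must instead be absorbed via the $L_1$-to-$X$ smallness of its bounded part.
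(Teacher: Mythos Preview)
Your outline has the right target (show $\liminf_{\tau\to0}\phi(\tau)/\tau=1$ and invoke Lemma~\ref{lemma-L1-char}) and several correct preliminary observations, but the two places you flag as delicate are in fact not closed, and one of them is the whole proof.

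\textbf{The positive part $f^+$.} You propose to split $f^+$ at a high level and control the unbounded tip by Corollary~\ref{cor-integral+}. But that corollary gives only a \emph{lower} bound $\frac{\phi(t)}{t}\|h\|_1\le\|h\|_X$; it cannot bound $\|f^+\ein_{\{f^+>M\}}\|_X$ from above. So the estimate $\|\ein-g\|_X\ge 2-\eps-\|f^+\|_X$ is useless unless you already know $X$ is close to $L_1$. The paper avoids this trap by never isolating $f^+$: it keeps $f$ intact and decomposes $[0,1]$ by sign \emph{and} magnitude ($A_1=\{f\le0,|f|\le2\}$, $A_2=\{f\le0,|f|>2\}$, $B_1$, $B_2$), then bounds $|\ein+f|$ pointwise by $\ein_{A_1\cup B_1}+|f|\ein_{A_2\cup B_2}$ plus two small error terms.

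\textbf{The conversion step.} This is the real gap. From your duality setup you obtain $\sigma^-$ with $\|\sigma^-\|_{X'}$ close to $1$, $\|\sigma^-\|_1\le\eps/2$, and $\int\sigma^-g\ge1-\eps$. But this pairing information does not yield $\phi(\tau_\eps)/\tau_\eps\to1$ via $\phi_X\phi_{X'}=\tau$: to use that identity you would need $\phi_{X'}(\tau_\eps)\to1$, i.e.\ $\sigma^-$ behaves like an indicator of $[0,\tau_\eps)$, and nothing forces this. Concretely, with your threshold $H_\eps=2(1-\eps)/\eps$ one has $H_\eps\|\sigma^-\|_1\le1-\eps$, so the entire lower bound $\int\sigma^-g\ge1-\eps$ could come from the region $\{g<H_\eps\}$, leaving $\int_0^{\tau_\eps}\sigma^-g$ unconstrained. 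More generally, what you need is that the $L_1$-mass of $g$ concentrates on a set of small measure; your argument only shows the $X$-mass concentrates there, and converting one to the other is precisely the circularity you yourself diagnose. Invoking $\phi_X\phi_{X'}=\tau$ is a restatement of the goal, not a proof.

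The paper breaks the circularity with a tool you do not use: the averaging operator of Corollary~\ref{cor-integral}. Applying it with $\Delta_2=A_1\cup B_1$ (the bounded-$|f|$ region, of measure $\ge\frac12$) replaces $|f|$ on $\Delta_2$ by its mean without increasing the $X$-norm; combined with the pointwise bound above this yields
\[
2-\eps\le 1+\phi(\mu(\Delta_2))\Bigl(1-\tfrac1{\mu(\Delta_2)}\textstyle\int_{\Delta_2}|f|\Bigr)+o(1),
\]
from which one reads off simultaneously $\int_{\Delta_2}|f|\to0$ (hence $\int_{A_2}|f|\to1$, the $L_1$-concentration) and $\phi(\mu(\Delta_2))\to1$. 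Two applications of Corollary~\ref{cor-integral+} to $|f|\ein_{A_2}$, first with $t=\mu(\Delta_2)$ and then with $t=\mu(A_2)$, give $\mu(A_2)\to0$ and finally $\phi(\mu(A_2))/\mu(A_2)\to1$. The averaging step is what produces genuine $L_1$-information from the $X$-norm hypothesis~(c); your duality route, as written, does not have a substitute for it.
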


Before giving the proof, we first record the main result of this
subsection as an immediate consequence.

\begin{corollary}\label{cor-L1-Daug}
The only separable real r.i.\ function space on $[0,1]$ with
the Daugavet property is $L_1[0,1]$ in its canonical norm.
\end{corollary}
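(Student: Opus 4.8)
The plan is to observe that Corollary~\ref{cor-L1-Daug} splits into two assertions: that $L_1[0,1]$ in its canonical norm has the \DP, and that it is the only separable real r.i.\ function space on $[0,1]$ with this property. The first assertion is classical (it is already recalled in the introduction), so I would only supply a reference. For the uniqueness assertion the strategy is to take an arbitrary separable real r.i.\ space $X$ on $[0,1]$ enjoying the \DP\ and to verify that it satisfies hypotheses (a)--(c) of Theorem~\ref{thm-L1-Daug}; the theorem then forces $X=L_1[0,1]$.

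Before applying the slice characterisation I would record two elementary normalisations. First, from the norm-one inclusions $L_\infty[0,1]\subset X\subset L_1[0,1]$ one gets $\|\ein\|_X\le\|\ein\|_\infty=1$ and $\|\ein\|_X\ge\|\ein\|_1=1$, so $\|\ein\|_X=1$. Second, the integration functional $\Lambda(g)=\int_0^1 g(t)\,dt$ satisfies $|\Lambda(g)|\le\|g\|_1\le\|g\|_X$, whence $\|\Lambda\|_{X^*}\le 1$; since $\Lambda(\ein)=1=\|\ein\|_X$, in fact $\|\Lambda\|_{X^*}=1$, and the same holds for $-\Lambda$.

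The core of the argument is a single application of the slice characterisation of the \DP\ from \cite[Lemma~2.2]{KadSSW} (in the real case, where $\re$ is the identity). Fix $\eps>0$ and apply it with $x=\ein\in S_X$ and $x^*=-\Lambda\in S_{X^*}$: this produces $f=f_\eps\in S_X$ with $(-\Lambda)(f)>1-\eps$ and $\|\ein+f\|_X>2-\eps$. Reading off the three conclusions, $\|f\|_X=1$ gives (a); $(-\Lambda)(f)>1-\eps$ rewrites as $\int_0^1 f(t)\,dt<-1+\eps$, giving (b); and $\|f+\ein\|_X>2-\eps$ gives (c). Since $\eps>0$ was arbitrary, Theorem~\ref{thm-L1-Daug} applies and yields $X=L_1[0,1]$.

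I do not expect a genuine obstacle here: the entire analytic difficulty has been absorbed into Theorem~\ref{thm-L1-Daug}, and this corollary is essentially a translation of the \DP\ into the language of that theorem. The only points that require a little care are the correct sign in the choice $x^*=-\Lambda$ (so that conclusion (b) comes out with the minus sign) and the verification that $\Lambda$ is a norm-one functional, both of which are routine.
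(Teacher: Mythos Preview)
Your proposal is correct and follows exactly the paper's own argument: apply the slice characterisation of the \DP\ with $x=\ein$ and $x^*=-\ein$ (your $-\Lambda$), read off conditions (a)--(c), and invoke Theorem~\ref{thm-L1-Daug}. Your added verifications that $\|\ein\|_X=1$ and $\|\Lambda\|_{X^*}=1$ are the only details the paper leaves implicit.
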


Indeed, the characterisation of the Daugavet property in terms of slices
(\cite[Lemma~2.2]{KadSSW}) that was
cited in Section~2 allows us to
deduce this corollary from Theorem~\ref{thm-L1-Daug} by putting
$x = \ein$, $x^* = -\ein$ and taking as $f$ the corresponding $y$.

\begin{proof}[Proof of Theorem~\ref{thm-L1-Daug}]
Fix $\eps > 0$ and $f = f_{\eps} \in X$ with the properties
(a), (b) and (c). Consider the following partition:
$$
[0,1] = A \cup B = A_1 \cup A_2 \cup B_1 \cup B_2,
$$
where
\begin{align*}
A &= \{t \in [0,1]\dopu  f(t) \leq 0\}, & B &= \{t \in [0,1]\dopu
f(t) > 0\}, \\
A_1 &= \{t \in A\dopu  |f(t)| \leq 2\}, & A_2 &= \{t \in A\dopu  |f(t)| > 2\}, \\
B_1 &= \{t \in B\dopu  |f(t)| \leq 2\}, & B_2 &= \{t \in B\dopu  |f(t)| > 2\}
\end{align*}
(all these sets depend on $\eps$).

Remark first that (a) and (b) imply
\begin{equation} \label{d1}
\int_B f \,d\mu < \eps,
\end{equation}
otherwise $g = f \ein_A - f \ein_B$ would be a norm-one function with
$|{\int_0^1g(t)\,dt}| > 1$. In particular
$$
\int_{B_1} f \,d\mu < \eps,
$$
and Lemma~\ref{lemma-L1-converg} says  that
\begin{equation} \label{d2}
u(\eps):= \|f \ein_{B_1}\|_X \to 0
\end{equation}
as $\eps\to0$.
Since also $\int_{B_2} f \,d\mu < \eps$ and $f \geq 2$ on $B_2$ we have
\begin{equation} \label{d3}
\mu(B_2) < \frac{\eps}{2}.
\end{equation}
From  $\|f\|_1 \leq \|f\|_X =1$ we deduce
\begin{equation} \label{d4}
\mu(A_2 \cup B_2) < \frac{1}{2}.
\end{equation}
Now, using the facts
$|\ein_{A_1}+f\ein_{A_1}|\leq\ein_{A_1}$ and
$|\ein_{A_2}+f\ein_{A_2}|\leq |f|\ein_{A_2}$,
it is easy to check that
\begin{align*}
|\ein+f|\leq\ein_{A_1} + |f|\ein_{A_2} + \ein_{B_1}
+ |f|\ein_{B_2}+|f|\ein_{B_1}+\ein_{B_2}
\end{align*}
and, therefore, one can write
\begin{align*}
2 - \eps \leq \|\ein + f\| &
\leq \left\| \ein_{A_1} + |f|\ein_{A_2} + \ein_{B_1}
+ |f|\ein_{B_2}\right\|+ \left\||f|\ein_{B_1}\right\| +
\left\|\ein_{B_2}\right\|  \\
&
 \leq  \left\||f|\ein_{A_2\cup B_2} + \ein_{A_1\cup B_1}\right\|+ u(\eps) + \phi(\eps)
\end{align*}
by (\ref{d2}) and (\ref{d3}).
An application of Corollary~\ref{cor-integral} with $\Delta_1= A_2 \cup B_2$,
$\Delta_2= A_1 \cup B_1$ and
$$
g = |f| + \left(1 - \frac{1}{\mu(\Delta_2)} \int_{\Delta_2} |f|
 \,d\mu \right)\ein_{\Delta_2}
$$
implies
\begin{align}
2 - \eps &\leq \|g\|_X + u(\eps) + \phi(\eps) \nonumber \\
&\leq 1
+ \phi(\mu(\Delta_2))\left(1 - \frac{1}{\mu(\Delta_2)}
\int_{\Delta_2} |f| \,d\mu \right) + u(\eps) + \phi(\eps).
\label{d7}
\end{align}
Since we have by (\ref{d4}) $\mu(\Delta_2) \geq \frac12$ for all values
of $\eps$,
the last inequality implies $\lim_{\eps \to 0}\int_{\Delta_2} |f| \,d\mu = 0$.
Together with (\ref{d1}) this means that
\begin{equation} \label{d8}
 \lim_{\eps \to 0} \int_{A_2} |f| \,d \mu =1.
\end{equation}
Condition (\ref{d7}) also implies that
\begin{equation} \label{d9}
\lim_{\eps \to 0} \phi(\mu(\Delta_2)) =1.
\end{equation}
Since $\mu(A_2) \leq \mu(\Delta_1) \leq \mu(\Delta_2)$
we can apply Corollary~\ref{cor-integral+} for
$g = |f|\ein_{A_2}$ and $t =  \mu(\Delta_2)$. Then,
\begin{equation} \label{d10}
 1 \geq \frac{1}{\mu(\Delta_2)}\phi(\mu(\Delta_2)) \int_{A_2} |f| \,d \mu.
\end{equation}
By (\ref{d8}), (\ref{d9}) and (\ref{d10}) this implies $\mu(\Delta_2) \to 1$
and consequently $\mu(A_2) \to0$ as ${\eps \to 0} $.
Now we can apply again the same
Corollary~\ref{cor-integral+} but for  $t =  \mu(A_2)$ and $g = |f|\ein_{A_2}$.
This gives us that  $\liminf_{\eps \to 0} \phi(t)/t = 1$, and since $t \to 0$
as $\eps \to 0$ an application of Lemma~\ref{lemma-L1-char} completes the proof.
\end{proof}

\begin{remark}
Theorem~\ref{thm-L1-Daug} also implies that $L_1[0,1]$
is the only separable real r.i.\ space  on $[0,1]$ with ``bad
projections'' (see the definition in \cite{IvKa}) and the only
separable real
r.i.\  space on $[0,1]$ with the property that
$\|\Id + T\| = \|\Id - T\|$ for every rank-one operator $T$ (the
last property appeared in \cite{KMM3}).
This is so since the latter property is
stronger than the former one, and since the characterisation of
spaces with ``bad projections'' in terms of slices fits the
conditions of Theorem~\ref{thm-L1-Daug}.
\end{remark}

\begin{remark}
In order to extend the results in this subsection to the complex case,
one would have to replace (b)
of Theorem~\ref{thm-L1-Daug}
by $\int_0^1 \re f(t)\,dt < -1 + \eps$. Unfortunately we haven't
succeeded in proving this.
\end{remark}

\subsection{The  almost Daugavet property  in separable r.i.\ function spaces}

In this subsection we will deal with another weakening of the
\DP. Let $Y$ be a closed linear subspace of $X^*$. According to
\cite{KaShWer}, $X$  has the
\DP\ with respect to $Y$ if the Daugavet equation (\ref{0-eq1})
holds true for every rank-one operator $T\dopu  X \to X$ of the
form $T = f \otimes e$, where $e \in X$ and $f \in Y$. $X$ is
said to be an {\it almost Daugavet space} if there is a norming
subspace $Y \subset X^*$ such that $X$  has the \DP\ with
respect to $Y$. A separable space is an almost Daugavet space
\cite[Theorem~1.1]{KaShWer} if and only if
there is a sequence $(v_n) \subset B_X$ such that for every $x
\in X$
$$
\lim_{n\to\infty} \|x+v_n\| =\|x\|+1.
$$

 We will now show that there are r.i.\ renormings $X$ of $L_1[0,1]$ with the
 almost \DP; in fact,
$ \dist(X, L_1[0,1])$ (the Banach-Mazur distance from $X$ to
$L_1[0,1]$)
can be arbitrarily large.

\begin{theorem}\label{thm-L1-alpha}
For every $\alpha \in (0,1)$ denote $X_\alpha$ the linear space
$L_1[0,1]$ equipped with the norm given by
$$
p_\alpha(f) = \frac{1}{\alpha}\sup\biggl\{\int_A |f| \,d \mu \dopu  A
\in \Sigma, \ \mu(A) \leq \alpha\biggr\}\qquad (f\in X).
$$
Then, the following hold:
\begin{enumerate}
\item $X_\alpha$ is an almost Daugavet r.i.\ space;
\item $\dist(X_\alpha, L_1[0,1]) \to \infty$
as $\alpha \to 0$.
\end{enumerate}
\end{theorem}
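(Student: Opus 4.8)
The plan begins by rewriting the norm in a form adapted to rearrangements. By the Hardy--Littlewood maximal characterisation, for $f\in L_1[0,1]$ one has $\sup\bigl\{\int_A|f|\,d\mu\dopu \mu(A)\le\alpha\bigr\}=\int_0^\alpha f^*(t)\,dt$, where $f^*$ denotes the non-increasing rearrangement of $|f|$; hence
$$
p_\alpha(f)=\frac1\alpha\int_0^\alpha f^*(t)\,dt .
$$
In particular $p_\alpha(f)$ depends only on $f^*$, which makes $p_\alpha$ rearrangement invariant and solid, while subadditivity of $f\mapsto\frac1\alpha\int_0^\alpha f^*$ (immediate from the sup-definition) gives the triangle inequality. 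Since $f^*$ is non-increasing, $\|f\|_1=\int_0^1 f^*\le\frac1\alpha\int_0^\alpha f^*=p_\alpha(f)\le f^*(0)=\|f\|_\infty$, so $L_\infty\subset X_\alpha\subset L_1$ with norm-one inclusions and $\|f\|_1\le p_\alpha(f)\le\alpha^{-1}\|f\|_1$. Thus $X_\alpha$ is a genuine separable r.i.\ space, isomorphic to $L_1[0,1]$ with $\dist(X_\alpha,L_1[0,1])\le1/\alpha$.

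For (1) I would invoke the sequential description of the almost \DP\ for separable spaces recalled above: it is enough to find $(v_n)\subset B_{X_\alpha}$ with $\lim_n p_\alpha(x+v_n)=p_\alpha(x)+1$ for every $x\in X_\alpha$. The natural candidates are thin tall spikes. Fix sets $S_n\subset[0,1]$ with $\beta_n:=\mu(S_n)\to0$ and set $v_n=\frac{\alpha}{\beta_n}\ein_{S_n}$, so that $p_\alpha(v_n)=1$. The bound $p_\alpha(x+v_n)\le p_\alpha(x)+1$ is the triangle inequality. For the reverse bound I would test the supremum defining $p_\alpha(x+v_n)$ on $E=S_n\cup E'$, where $E'\subset[0,1]\setminus S_n$ has measure $\alpha-\beta_n$ and is a superlevel set of $|x|$; since $v_n$ vanishes off $S_n$,
$$
\int_0^\alpha(x+v_n)^*\ge\int_{S_n}|x+v_n|\,d\mu+\int_0^{\alpha-\beta_n}\bigl(|x|\ein_{[0,1]\setminus S_n}\bigr)^* .
$$
Here $\int_{S_n}|x+v_n|\ge\alpha-\int_{S_n}|x|\to\alpha$ by absolute continuity of the integral, and, using that $g\mapsto\int_0^s g^*$ is $1$-Lipschitz for the $L_1$-norm together with $\int_{S_n}|x|\to0$, the second summand tends to $\int_0^\alpha x^*$. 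Dividing by $\alpha$ yields $\liminf_n p_\alpha(x+v_n)\ge1+p_\alpha(x)$, which proves (1).

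For (2) the point is that $X_\alpha$ contains large isometric copies of $\ell_\infty^k$, a feature incompatible with being close to $L_1$. If $A_1,\dots,A_k$ are pairwise disjoint with $\mu(A_i)=\alpha$, which is possible for $k=\lfloor1/\alpha\rfloor$, then the rearrangement of $\sum_i c_i\ein_{A_i}$ equals $\max_i|c_i|$ on $[0,\alpha)$, so $p_\alpha\bigl(\sum_i c_i\ein_{A_i}\bigr)=\max_i|c_i|$ and $\linspan\{\ein_{A_i}\dopu i\le k\}$ is isometric to $\ell_\infty^k$. I would then compare cotype-$2$ constants. Since $L_1[0,1]$ has cotype $2$ with an absolute constant $C$, while cotype-$2$ constants are monotone under passing to subspaces and multiply by at most the Banach--Mazur distance under isomorphisms, one gets $C_2(\ell_\infty^k)\le C_2(X_\alpha)\le\dist(X_\alpha,L_1[0,1])\,C$. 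Testing the cotype-$2$ inequality on the unit vectors of $\ell_\infty^k$ gives $\sqrt k\le C_2(\ell_\infty^k)$, whence $\dist(X_\alpha,L_1[0,1])\ge\sqrt{\lfloor1/\alpha\rfloor}\,/\,C\to\infty$ as $\alpha\to0$.

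I expect the delicate step to be the limit in part (1): one must check that the top-$\alpha$ part of $|x+v_n|$ separates, in the limit, into the mass $\alpha$ carried by the spike and the term $\int_0^\alpha x^*$ carried by $x$, with no cross interference. This rests on two soft facts---absolute continuity of $A\mapsto\int_A|x|$ and the $L_1$-Lipschitz continuity of $g\mapsto\int_0^s g^*$---but they have to be reconciled with the moving cut-off $\alpha-\beta_n$, which is where the only real care is needed. Part (2) is then routine once the isometric embedding of $\ell_\infty^k$ is spotted, the sole external ingredient being the bounded cotype-$2$ constant of $L_1$.
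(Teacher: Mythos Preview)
Your proof is correct and follows essentially the same route as the paper's: for (1), both use thin tall spikes $v_n$ supported on sets of vanishing measure and test the supremum defining $p_\alpha$ on the union of the spike's support with a near-optimal set for $x$; for (2), both exhibit an isometric copy of $\ell_\infty^{\lfloor 1/\alpha\rfloor}$ inside $X_\alpha$ spanned by disjoint indicators of measure~$\alpha$. Your version is in fact more explicit in part~(2), supplying the cotype-2 comparison that the paper leaves to the reader.
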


\begin{proof}
By construction, $X_\alpha$ is rearrangement invariant.
Denote $v_n = \frac{\alpha}{n}\ein_{[0,1/n]}$. Evidently,
$p_\alpha(v_n) = 1$ for all $n >  \frac{1}{\alpha}$.
If we show that $\lim_{n\to\infty}p_\alpha(f + v_n) = p_\alpha(f) + 1$
for every $f \in X_\alpha$, then the almost
Daugavet property of $X_\alpha$ will be proved. Indeed, fix an $f \in X_\alpha$.
By the definition of $p_\alpha$ there is a sequence of $A_n \in
\Sigma$ such that $\mu(A_n) \leq \alpha$
and $\frac{1}{\alpha}\int_{A_n} |f| \,d \mu \to p_\alpha(f)$. By the
absolute continuity of the
Lebesgue integral one can modify $A_n$ in order to fulfill
additionally the
conditions  $\mu(A_n) \leq \alpha - \frac1n$, $A_n \cap [0,1/n] =
\emptyset$. Then
\begin{align*}
p_\alpha(f + v_n) &
\geq  \frac{1}{\alpha} \int_{A_n \cup [0,1/n]} |f + v_n| \,d\mu \\
&=  \frac{1}{\alpha} \int_{A_n} |f| \,d\mu +
\frac{1}{\alpha} \int_{[0,1/n]} |f + v_n| \,d\mu
\\
&\geq \frac{1}{\alpha} \int_{A_n} |f| \,d\mu
+ 1 - \frac{1}{\alpha} \int_{[0,1/n]} |f| \,d\mu \\
&\to  p_\alpha(f) + 1.
\end{align*}
So, (1) is proved. To prove (2) it is enough to remark that $X_\alpha$
contains a subspace isometric to $\ell_\infty^{(m)}$, where $m$ is the
entire part of
$1/\alpha$. This subspace is spanned by the functions
$\ein_{[0, \alpha]}, \ein_{[\alpha, 2 \alpha]}, \ldots , \ein_{[(m-1)\alpha, m \alpha]}$.
\end{proof}



\begin{thebibliography}{00}

\bibitem{AcKaMa}
{\sc M.~D.~Acosta, A.~Kami{\'n}ska, and M.~Masty{\l}o},
\newblock The Daugavet property and weak neighborhoods in Banach
lattices,
\newblock {\em Preprint}.

\bibitem{AvKaMaMeSh1}
{\sc  A.~Avil\'{e}s,  V.~Kadets,  M.~Mart\'{\i}n,
 J.~Mer\'{\i}, and   V.~Shepelska},
\newblock Slicely countably determined Banach spaces,
\newblock {\em C. R., Math., Acad. Sci. Paris}
 \newblock {\bf 347} (21-22) (2009),  1277--1280.

\bibitem{AvKaMaMeSh2}
{\sc  A.~Avil\'{e}s,  V.~Kadets,  M.~Mart\'{\i}n,
 J.~Mer\'{\i}, and   V.~Shepelska},
\newblock  Slicely countably determined Banach spaces,
\newblock {\em Trans. Amer. Math. Soc.}
\newblock {\bf 362} (2010), no.~9 , 4871--4900.

\bibitem{BenSha}
\textsc{C.~Bennett and R.~Sharpley},
\newblock {\em  Interpolation of operators},
\newblock Pure and Applied Mathematics \textbf{129}, Academic Press,
Inc., New York, 1988.

\bibitem{BouRos}
\textsc{J.~Bourgain and H.~P.~Rosenthal},
\newblock Martingales valued in certain subspaces of $L^{1}$,
\newblock{\em Isr. J. Math.}
\newblock \textbf{37} (1980), no.~1-2, 54--75.


\bibitem{BKMW}
\textsc{K.~Boyko, V.~Kadets, M.~Mart\'{\i}n, and D.~Werner},
\newblock Numerical index of Banach spaces and duality,
  \newblock {\em Math. Proc. Cambridge Phil. Soc.}
\newblock \textbf{142} (2007), 93--102.


\bibitem{D-Mc-P-W}
\textsc{J.~Duncan, C.~McGregor, J.~Pryce, and A.~White},
\newblock The numerical index of a normed space,
\newblock {\em J. London Math. Soc.}
\newblock \textbf{2} (1970), 481--488.

\bibitem{IvKa} \textsc{Y.~Ivakhno and V.~Kadets},
\newblock Unconditional sums of spaces with bad projections,
 \newblock {\em Visn. Khark. Univ., Ser. Mat. Prykl. Mat. Mekh.}
 \newblock {\bf 645} (2004), no.~54, 30--35.


\bibitem{IvKadWer}
{\sc Y.~Ivakhno, V.~Kadets, and D.~Werner},
\newblock The Daugavet property
for spaces of Lipschitz functions,
\newblock {\em Math. Scand.}
\newblock {\bf 101} (2007), 261--279.
\newblock {\em Corrigendum.} Ibid. {\bf 104} (2009), 319.

%
\bibitem{KMM3}
\textsc{V.~Kadets, M.~Mart\'{\i}n and J.~Mer\'{\i}},
\newblock Norm equalities for operators on Banach spaces,
\newblock {\em Indiana Univ. Math. J.}
\newblock \textbf{56} (2007), no.~5, 2385--2411.

\bibitem{KMMP}
\textsc{V.~Kadets. M.~Mart\'{\i}n, J.~Mer\'{\i}, and R.~Pay\'{a}},
 \newblock Convexity and smoothness of Banach spaces
with numerical index one,
\newblock {\em Illinois J. Math.}
\newblock \textbf{53} (2009), no.~1, 163--182.

\bibitem{KMMS}
\textsc{V.~Kadets, M.~Mart\'{\i}n, J.~Mer\'{\i}, and V.~Shepelska},
\newblock Lushness, numerical index one and duality,
\newblock {\em J. Math. Anal. Appl.}
\newblock {\bf 357} (2009), no.~1, 15--24.



\bibitem{KaShWer}
{\sc   V.~Kadets,  V.~Shepelska, and D.~Werner},
\newblock Thickness of the unit sphere, $\ell_1$-types
and the almost Daugavet property,
\newblock To appear in {\em Houston J. of Math.}


\bibitem{KaSh2}
{\sc   V.~Kadets and  V.~Shepelska},
\newblock Sums of SCD sets and their applications
to SCD operators and narrow operators,
\newblock {\em Cent. Eur. J. Math.}
\newblock {\bf 8} (2010), no.~1, 129--134.

\bibitem{KadSSW}
{\sc V.~M. Kadets, R.~V. Shvidkoy, G.~G. Sirotkin, and D.~Werner},
\newblock Banach spaces with the Daugavet property,
\newblock {\em Trans. Amer. Math. Soc.}
\newblock {\bf 352} (2000), 855--873.

\bibitem{KW}
\textsc{V.~Kadets and D.~Werner},
\newblock A Banach space with the Schur and the Daugavet property,
\newblock{\em Proc. Amer. Math. Soc.}
\newblock \textbf{132} (2004), no.~6, 1765--1773.

\bibitem{LTII}
\textsc{J.~Lindenstrauss and  L.~Tzafriri},
\newblock    \emph{Classical Banach Spaces II},
\newblock Springer-Verlag, Berlin-Heidelberg-New York, 1979.

\bibitem{LeMaMe}
\textsc{H.~J.~Lee, M.~Mart\'{\i}n, and J.~Mer\'{\i}},
 \newblock Polynomial numerical indices of Banach spaces with absolute
 norm,
\newblock {\em Preprint}.


\bibitem{MaOi}
\textsc{M.~Mart\'{\i}n and T.~Oikhberg},
\newblock An alternative Daugavet property,
\newblock {\em J. Math. Anal. Appl.}
\newblock    \textbf{294} (2004), 158--180.

\bibitem{OkadaRickerSanchezperez}
 \textsc{S. Okada, W. J. Ricker and E. A. S\'{a}nchez P\'{e}rez},
 \newblock {\em Optimal Domain and Integral Extension of Operators Acting in
Function Spaces},
 \newblock Operator Theory: Advances and Applications, vol. 180,
 Birkh\"{a}user Verlag, Basel, 2008.


\bibitem{Tal}
\textsc{M.~Talagrand},
\newblock The three-space problem for $L^1$,
\newblock{\em J. Amer. Math. Soc.}
\newblock \textbf{3} (1990), no.~1, 9--29.

\end{thebibliography}
\end{document}